\documentclass[11pt]{article}
\usepackage{amsmath, amssymb, amscd, amsthm, amsfonts}
\usepackage{graphicx}
\usepackage{hyperref}
\usepackage{mathrsfs}

\oddsidemargin 0pt
\evensidemargin 0pt
\marginparwidth 40pt
\marginparsep 10pt
\topmargin -20pt
\headsep 10pt
\textheight 8.7in
\textwidth 6.65in
\linespread{1.2}

\title{ Generalized probabilistic approximation characteristic based on Birkhoff orthogonality and related conclusions in $S_{\infty}$-norm  }
\author{Weiye Zhang \and Chong Wang \and Huan Li *}
\date{}

\newtheorem{Definition}{Definition}
\newtheorem{Remark}{Remark}
\newtheorem{thm}{Theorem}[section]
\newtheorem{lem}{Lemma}[section]

\begin{document}

\maketitle

\begin{abstract}
In this article, we generalize the definition of the probabilistic Gel'fand width from the Hilbert space to the strictly convex reflexive space by giving Birkhoff left orthogonal decomposition theorem. Meanwhile, a more natural definition of Gel'fand width in the classical setting is selected to make sure probabilistic and average Gel'fand widths will not lose their meaning, so that we can give the equality relation between the probabilistic Gel'fand width and the probabilistic linear width of the Hilbert space. Meanwhile, we use this relationship to continue the study of the Gel'fand widths of the univariate Sobolev space and the multivariate Sobolev space, especially in $S_{\infty}$-norm, and determine the exact order of probabilistic and average Gel'fand widths.
\end{abstract}


\section{Introduction}

The approximation theory of functions is one of the most important analytical directions in mathematics with a long history. In the current era, it is not only closely related to functional analysis and other branches of mathematics such as numerical analysis, partial differential equations(PDEs), geometric analysis, and so on, but also widely used in information theory and computational complexity, signal processing, image recognition, and even in large language models(LLM). These correlations are particularly prominent in the width theory of approximation theory. The width theory gives the asymptotic order of error when one linear subspace approximates its background linear space. This provides us with a way to theoretically explore the concept of infinity, and it can also be used to handle problems with a huge amount of information.
In terms of computational complexity, Traub, Wasilkowski, and Wozniakowski \cite{ref1} gave the relationship between width and algorithm complexity, which has established relations between the Gel'fand $n$-widths and $n$th minimal radii of information in the classical setting. In this way, we can use the width theory for data processing and algorithm evaluation, especially in LLM.
In the aspect of PDEs, Wenzel et al. \cite{ref2} introduced a scale of greedy selection criteria when studying meshless approximation for solutions of boundary value problems of elliptic partial differential equations. Then they provided bounds on the approximation error for greedy selection criteria and analyzed the corresponding convergence rates by analyzing Kolmogorov widths of special sets of point-evaluation functionals. 

Three kinds of widths, Kolmogorov width, linear width, and Gel'fand width, are the most important concepts in width theory. With the deepening of the research, in order to meet different requirements, two different frameworks, namely probability and average framework, have been proposed on the basis of the classical setting. In recent years, many works have been done in the probabilistic and average setting. 
Maiorov \cite{ref3} studied the Kolmogorov width of Wiener space with $L_\infty$ norm in average setting. Soon, he \cite{ref4,ref5} defined the Kolmogorov width and linear width in probabilistic setting and was the first to use the discretization method to estimate the asymptotic order of the width in probabilistic setting. This method has been used until now. 
Later, Fang and Ye \cite{ref6} used results of Maiorov to calculate the probabilistic and average linear widths of univariate Sobolev space $W_{2}^{r}$ with Gaussian measure in $L_q(1\leq q< \infty)$ norm. And then they \cite{ref7} studied probabilistic and average linear widths of the univariate Sobolev space in $L_\infty$ norm. It is the first time to solve the problem when $q=\infty$.
Chen and Fang \cite{ref8} presented sharp orders of Kolmogorov probabilistic $(n,\delta)\text{-width}$ and $p\text{-average}$ $n\text{-width}$ of multivariate Sobolev space $MW_{2}^{r}$ with Gaussian measure.
Wang \cite{ref9} discussed the widths of weighted Sobolev spaces with the weight $(1-\|x\|_2^2)^{\mu-\frac{1}{2}}$ on the Euclidean unit ball. And he calculated the asymptotic orders of probabilistic linear $(n,\delta)\text{-width}$ and $p\text{-average}$ linear $n\text{-width}$.
Tan and Shao et al. \cite{ref10} proposed the Gel'fand width in the probabilistic setting and estimated the sharp order of the probabilistic Gel'fand width of finite-dimensional space. By using the result of finite-dimensional space, they obtained  the probabilistic Gel'fand width of $W_{2}^{r}$ with Gaussian measure in $L_q(1\leq q< \infty)$ norm. Here, the result in $L_\infty$ was not given.
Liu et al.\cite{ref11} defined the Gel'fand width in the average setting. Meanwhile, he obtained the average Gel'fand widths of $W_{2}^{r}$ and $MW_{2}^{r}$ with Gaussian measure in $L_q(1\leq q< \infty)$ norm. The result here did not include $L_\infty$ either. 
Xu et al.\cite{ref12} determined the asymptotic order of probabilistic and average linear widths of $W_{2}^{r}$ in $S_q(1\leq q\leq \infty)$. 
Wu et al.\cite{ref13} calculated the probabilistic and average Gel'fand widths of $W_{2}^{r}$ and $MW_{2}^{r}$ with Gaussian measure in $S_q(1\leq q< \infty)$ norm. So, the result in $S_{\infty}$ remained.

The innovations of this article lie in two aspects. On one hand, we use the Birkhoff orthogonality to generalize the definition of the probabilistic Gel'fand width from the Hilbert space to the strictly convex reflexive space. In this way, we can consider the probabilistic Gel'fand widths of $L^p$, $l^p$, $W^{k,p}$ where $1<p<\infty,p\neq2$ and so on. Since orthogonal decomposition has never been considered in the general normed space, this promotion has never been carried out before.
On the other hand, we give the Gel'fand widths of the univariate Sobolev space and the multivariate Sobolev space in $S_{\infty}$-norm. For one thing, when $q=\infty$, the properties of $L_q$ spaces and $S_q$ spaces differ fundamentally from those when $1 \leq q < \infty$. From Ref\cite{ref7}, we can clearly find the differences between the proof of $L_\infty$ and those when $1 \leq q < \infty$. For another, the $S_q$ spaces derive from Stiefel manifold. In conclusion, our study in $S_\infty$ is meaningful. We improve the proof of Ref\cite{ref11} and then give the equality relation between the probabilistic Gel'fand width and the probabilistic linear width of the Hilbert space. Meanwhile, we use the relationship mentioned before to continue the study of the Gel'fand widths and determine the exact order of probabilistic and average Gel'fand widths.

In this paper, we choose a better definition of Gel'fand width in the classical setting to make sure probabilistic and average Gel'fand widths will not lose their meaning. 

Firstly, let us review the basics of the width theory. We will first introduce the definitions of Kolmogorov width, linear width, and Gel’fand width in the classical setting. Let $X$ be a normed linear space with norm $\|\cdot\|_X$. $\{0\}\subset W$ is a non-empty subset of $X$. $F_n$ is a linear subspace of $X$ with dimension n, and n is a non-negative integer. We will use these letters throughout the paper. The best approximation of $F_n$ to $W$ in $X$ is defined as 
$$ e(W, F_n, X) := \sup_{x\in W} \inf_{y\in F_n} \|x-y\|. $$
We can clearly find that $e(W, F_n, X)$ represents the deviation or error when the set $W$ is approximated by $F_n$ in the normed space $X$. Figuratively, we can understand it as the "maximum distance" from $F_n$ to $W$ considering the norm $\|\cdot\|_X$. It is similar to the Gromov-Hausdorff distance.

Then the quantity
$$ d_n(W, X) := \inf_{F_n} e(W, F_n, X)=\inf_{F_n} \sup_{x\in W} \inf_{y\in F_n} \|x-y\| $$
is called the Kolmogorov $n$-width of $W$ in $X$. In the first infimum, we take over all $n$-dimensional subspaces $F_n$ of $X$. 

Let $T_n$ be a bounded linear operator on $X$ with rank at most $n$ and let $F^n$ be a closed linear subspace of $X$ with co-dimension n. The definitions of the linear $n$-width and the Gel’fand $n$-width are as follows:
\begin{align*}
    \lambda_{n}(W,X) &:= \inf_{T_{n}} \sup_{x\in W} \|x-T_{n}x\| \\
    d^{n}(W,X) &:= \inf_{F^{n}} \sup_{x\in W\cap F^{n}}\|x\| 
\end{align*}
where the infimum in $\lambda_{n}(W,X)$ is taken over all $T_n$ on $X$ and the infimum in $ d^{n}(W,X)$ is taken over all $F^n$ on $X$. In particular, since $\{0\}\subset W$ and $\{0\}\subset F^n$, $W\cap F^{n}$ is not empty. More elementary information about these three widths can be obtained in the book \cite{ref14}.

Next, we will consider widths in the probabilistic and average setting.

\begin{Definition}[\cite{ref13}]\label{def1}
    Let $X$ and $T^n$ be consistent with $\lambda_n(W,X)$. Let $\mathscr{B}$ be the Borel field on $W$, which is the smallest $\sigma\text{-algebra}$ consisting of all open subsets of $W$, and let $\mu$ be the probabilistic measure defined on $\mathscr{B}$. For any $\delta\in \left[ 0,1 \right]$, the probabilistic linear $(n,\delta)$-width of $W$ in $X$ is given by
	\begin{align*}
		&\lambda_{n,\delta}(W,\mu,X):=\inf_{G_\delta}\lambda_{n}(W\setminus G_{\delta},X)=\inf_{G_\delta}\inf_{T_{n}} \sup_{x\in (W\setminus G_\delta)} \|x-T_{n}x\|,
	\end{align*}
    where $G_{\delta}$ runs over all possible subsets in $\mathscr{B}$ with $\mu(G_{\delta})\leq \delta$. 
    
    The $p$-average linear $n$-width of $W$ in $X$ is given by
	\begin{align*}
		\lambda_{n}^{(a)}(W,\mu,X)_{p}:=\inf_{T_{n}}\left(\int_{W}\|x-T_{n}x\|^{p}d\mu(x)\right)^{1/p},\ 0<p<\infty.
	\end{align*}
\end{Definition}

\begin{Remark}\label{rmk1}
    In particular, when $\delta = 0$ and $G_\delta = \varnothing$, the probabilistic linear $(n,\delta)$-width is the linear width in the classical setting. When $\delta = 1$, according to the definition of the linear $n$-width, $G_\delta \neq W$ and $W\setminus G_\delta$ is a non-empty set of measure zero.
\end{Remark}

Then, let us recall how the probabilistic Gel'fand width was introduced according to Ref\cite{ref10}. Firstly, let $H$ be a Hilbert space equipped with the probabilistic measure $\mu$ and $M \subset H$ be a closed subspace. Secondly, we have the Hilbert orthogonal decomposition. For any $x\in H$, $x$ can be decomposed as $x=y+z,y\in M$, uniquely. Let $P$ be a projection operator such that $y=Px$ is the projection of $x$ on $M$. Finally, we limit the measure to $M$. For any Borel set $G_{M}$ of $M$, we have $\mu_{M}(G_{M}):=\mu(\{x\in H\mid Px\in G_{M}\})$. Obviously, $\mu_{M}$ is a probabilistic measure on $M$. Then we have the following definition of the Gel'fand width in probabilistic setting.

\begin{Definition}[\cite{ref10}]\label{def2}
    Let $X$ and $F^n$ be consistent with $d^n(W,X)$. Let $H$ be a Hilbert space and $H$ can be continuously embedded into $X$. Meanwhile, $\mu$ and $G_{\delta}$ are defined in the same way as in Definition \ref{def1}. Then, for any $\delta\in \left( 0,1 \right]$, the probabilistic Gel'fand $(n,\delta)$-width of $H$ in $X$ is given by
    $$d_{\delta}^{n}(H,\mu,X):=\inf_{G_{\delta}} d^{n} (H \setminus G_{\delta},X)=\inf_{G_{\delta}}\inf_{F^{n}}\sup_{x\in (H\setminus G_{\delta})\cap F^{n}}\|x\|.$$
    Meanwhile, $G_{\delta}$ satisfies that for any closed subspace $M$ of $H$, 
    \begin{equation}\label{eq1.1}
        \mu_{M}(G_{\delta}\cap M)\leq\delta,\tag{1.1}
    \end{equation}
    which guarantees $(H\setminus G_{\delta})\cap F^{n}$ has enough elements.
\end{Definition}

\begin{Remark}\label{rmk2}
    According to the definition of $d^{n}(W,X)$, we have at least $\{0\}\subset (H\setminus G_{\delta})\cap F^{n}$, so $(H\setminus G_{\delta})\cap F^{n} \neq \varnothing$, which makes sure that the probabilistic Gel'fand width makes sense. Namely, $G_{\delta}$ satisfies $\{0\}\not\subset G_{\delta}$, and when $\delta=1$, $G_\delta$ cannot equal to $H$ and $H\setminus G_\delta$ is a non-empty set of measure zero. Otherwise, $G_\delta=G_1=H$ always leads to $\mu_{M}(G_\delta\cap M)=1$ and $H\setminus G_{\delta}=\varnothing$. In this way, condition \ref{eq1.1} cannot guarantee $(H\setminus G_{\delta})\cap F^{n}$ has enough elements. Therefore, our condition $\{0\}\subset (H\setminus G_{\delta})\cap F^{n}$ or $\{0\}\not\subset G_{\delta}$ is necessary. In fact, if $H$ can be continuously embedded into $X$, $H$ can be regarded as a subspace $W$ of $X$. So $d_\delta^n(H,\mu,X)=d_\delta^n(W,\mu,X)$.
\end{Remark}

\begin{Definition}[\cite{ref11}]\label{def3}
    Let $W$, $X$ and $F^n$ be the same as $d^n(W,X)$, $\mu$ be the same as Definition \ref{def1}. The $p$-average Gel'fand $n$-width of $W$ in $X$ is given by
    $$d_{(a)}^{n}(W,\mu,X)_{p}:=\inf\limits_{F_{n}}\left(\int_{W\cap F^{n}}\|x\|^{p}d\mu(x)\right)^{1/p},\ 0<p<\infty.$$
\end{Definition}

Here, we provide clarifications for relevant notations. Let $c_{j}>0,\ j=0,1,2,\cdots$ represent positive constants, and let $a(y)$ and $b(y)$ be two arbitrary positive functions defined on the set $D$. $a(y)\gg b(y)$ or $a(y)\ll b(y)$ means that for all $y\in D,\ a(y)\geq c_{1}b(y)$ or $a(y)\leq c_{2}b(y)$, respectively. And we use $a(y)\asymp b(y)$ to express that $a(y)\gg b(y) \ \text{and}\  a(y)\ll b(y),\ y\in D$.

\section{The generalization of the probabilistic Gel'fand width}\label{sec2}

In this section, we will use the Birkhoff orthogonality to generalize the definition of the probabilistic Gel'fand width from the Hilbert space to the strictly convex reflexive space. According to the guiding process of Definition \ref{def2}, two very important definitions are mentioned: orthogonal decomposition and projection operator. If we want to generalize the probabilistic Gel'fand width of the Hilbert space, we need to consider projection first to decompose vectors. At the same time, orthogonality should also be considered to satisfy orthogonal decomposition and geometric intuition. 

Here, we give some definitions and lemmas in function approximation theory and geometry. These lemmas will greatly simplify our proof of theorem \ref{thm2.1}.

\begin{Definition}\label{def4}
    Let $(X,\|\cdot\|)$ be a normed linear space and $F \subset X$. Consider selecting elements from $F$ in some way as approximate representations of the elements in $X$. Then for any $x\in X$ and $F \neq \varnothing$,
    $$\mathscr{L}_F(x)=\{ u_0 \in F \ | \ \Vert x-u_0 \Vert = \inf\limits_{u \in F} \| x-u \|\}$$
    is called the set of the best approximation elements.
    \begin{enumerate}
        \item[(1)] If for each $x\in X$, we have $\mathscr{L}_F(x) \neq \varnothing$, then $F$ is called an existential set in $X$.
        \item[(2)]If for each $x\in X$, $\mathscr{L}_F(x)$ is an empty set or a one-point set, then $F$ is called a unique set in $X$.
        \item[(3)]If for each $x\in X$, $\mathscr{L}_F(x)$ is a one-point set, then $F$ is called a Chebyshev set in $X$. Namely, a Chebyshev set is both a unique set and an existential set.
    \end{enumerate}
\end{Definition}

\begin{Definition}\label{def5}
    When $F$ is an existential set, take any $x\in X$, and we can always find an element $y$ in $\mathscr{L}_F(x)$ as the image of $x$. This kind of mapping denoted by $P_F$ is called metric projection or the best approximation operator from $X$ to $F$.
\end{Definition}

\begin{Remark}\label{rmk3}
    If $F$ is an existential set, we have $P_F(x)=\mathscr{L}_F(x)=\{ u_0 \in F \ | \ \Vert x-u_0 \Vert = \inf\limits_{u \in F} \| x-u \|\}$. Otherwise, $P_F$ does not exist.
\end{Remark}

\begin{Definition}[\cite{ref15}]\label{def6}
    Let $X$ be a Banach space and $x,y \in X$. Then $x$ is said to be Birkhoff orthogonal to $y$, written as $x \perp_B y$, if for any scalar $\lambda$, $\|x+\lambda y\|\geq\|x\|$.

    Let $Y\neq \varnothing$ be a linear subspace in $X$. If for each $y\in Y$ and any scalar $\lambda$, we have $\|x+\lambda y\|\geq\|x\|$, then $x$ is said to be Birkhoff orthogonal to $Y$, written as $x \perp_B Y$.
\end{Definition}

\begin{Remark}\label{rmk4}
    The Birkhoff orthogonality does not satisfy symmetry. 
\end{Remark}
    
\begin{Remark}\label{rmk5}  
    In function approximation theory, we have the following variational condition: Let $Y\neq \varnothing$ be a linear subspace in $X$. For $x,x_0\in X$ and $e=x-x_0$, it is easy to know that $x_0\in\mathscr{L}_Y(x)\Leftrightarrow \|e+y\|\geq\|e\|,\forall y\in Y$. It can be described in geometric language as orthogonality in a general normed space: for each $y\in Y$, we have $\|x+ y\|\geq\|x\|$, then $x$ is said to be orthogonal to $Y$. Since $Y$ is a linear subspace, for any $y\in Y$ and any scalar $\lambda$, $\lambda y\in Y$. So, the definition of orthogonality in function approximation theory is the same as Birkhoff orthogonality. They all embody the idea that the vertical distance is the shortest.
\end{Remark}

\begin{lem}\label{lem2.1}
    Any closed linear subspace in a reflexive Banach space is an existential set.
\end{lem}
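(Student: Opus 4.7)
The plan is the classical minimizing-sequence argument, relying on the two workhorse facts available in a reflexive Banach space: the Banach--Alaoglu/Kakutani theorem (bounded sequences have weakly convergent subsequences) and Mazur's theorem (closed convex sets are weakly closed).

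Fix $x \in X$ and set $d := \inf_{u \in F} \|x - u\|$. Since $0 \in F$, we have $d \leq \|x\| < \infty$. First I would pick a minimizing sequence $\{u_n\} \subset F$ with $\|x - u_n\| \to d$. The triangle inequality gives $\|u_n\| \leq \|x\| + \|x - u_n\|$, so $\{u_n\}$ is bounded in $X$.

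Next, I would invoke reflexivity: by Kakutani's theorem the closed unit ball of $X$ is weakly compact, so by the Eberlein--\v{S}mulian theorem the bounded sequence $\{u_n\}$ admits a weakly convergent subsequence $u_{n_k} \rightharpoonup u_0$ for some $u_0 \in X$. Since $F$ is a closed linear subspace, it is convex and norm-closed, hence weakly closed by Mazur's theorem; therefore $u_0 \in F$. Finally, the norm is weakly lower semi-continuous, so
\begin{align*}
    \|x - u_0\| \leq \liminf_{k \to \infty} \|x - u_{n_k}\| = d.
\end{align*}
Combined with $\|x - u_0\| \geq d$ (from $u_0 \in F$), this gives $\|x - u_0\| = d$, so $u_0 \in \mathscr{L}_F(x)$ and $F$ is an existential set.

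There is no real obstacle: the result is textbook, and every ingredient (reflexivity $\Rightarrow$ weak sequential compactness of bounded sets, Mazur, weak lower semi-continuity of the norm) is standard. The only mild care point is recording that a closed linear subspace is convex so that Mazur applies; beyond that the argument is entirely routine.
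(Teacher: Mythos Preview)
Your argument is correct and is exactly the classical proof of this standard result. Note, however, that the paper does not actually supply a proof of Lemma~\ref{lem2.1}; it is stated without proof as a known fact from approximation theory, so there is no alternative approach in the paper to compare against.
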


\begin{lem}\label{lem2.2}
    The convex set in the strictly convex space is a unique set.
\end{lem}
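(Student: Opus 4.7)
The plan is to argue by contradiction. Suppose $F$ is a convex subset of a strictly convex normed space $X$, and suppose some $x\in X$ admits two distinct best approximations $u_1,u_2\in\mathscr{L}_F(x)$. Set $d=\inf_{u\in F}\|x-u\|=\|x-u_1\|=\|x-u_2\|$.

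First I would dispose of the trivial case $d=0$: then $\|x-u_i\|=0$ forces $u_1=u_2=x$, contradicting distinctness. So assume $d>0$. Next I would exploit convexity of $F$: the midpoint $u:=\tfrac{1}{2}(u_1+u_2)$ lies in $F$, so by definition of the infimum,
\[
\|x-u\|\;\geq\;d.
\]

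The crux is strict convexity. Normalize by setting $v_i:=(x-u_i)/d$, so that $\|v_1\|=\|v_2\|=1$ and $v_1\neq v_2$ (since $u_1\neq u_2$). Strict convexity of the unit ball then yields $\bigl\|\tfrac{1}{2}(v_1+v_2)\bigr\|<1$, and rescaling gives
\[
\|x-u\|\;=\;\Bigl\|\tfrac{1}{2}\bigl((x-u_1)+(x-u_2)\bigr)\Bigr\|\;<\;d,
\]
which contradicts the previous lower bound. Hence $\mathscr{L}_F(x)$ contains at most one point, i.e.\ $F$ is a unique set.

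There is no real obstacle; the argument is a textbook application of the definition of strict convexity. The only conceptual point worth flagging is that the role of convexity of $F$ is to place the midpoint $u$ back into $F$ (so that $\|x-u\|\geq d$), while the role of strict convexity of the norm is to force the strict inequality $\|x-u\|<d$ on the normalized error vectors; these two facts are pitted against each other to derive the contradiction.
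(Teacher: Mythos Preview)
Your argument is correct and is the standard textbook proof. Note, however, that the paper does not actually supply a proof of this lemma: it is stated as a known preliminary fact (alongside Lemma~\ref{lem2.1} and the cited Lemma~\ref{lem2.3}) and used without justification in the proof of Theorem~\ref{thm2.1}. So there is nothing in the paper to compare your approach against; your write-up simply fills in the omitted details.
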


\begin{lem}[\cite{ref16}]\label{lem2.3}
    A normed linear space is strictly convex if and only if $\|x\|+\|y\|=\|x+y\|$ and $y\neq 0$ imply the existence of a number $t$ for which $x=ty$.
\end{lem}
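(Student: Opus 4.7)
The plan is to prove the two implications separately; the forward direction carries the real content, and the backward direction is a brief contrapositive.

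For the forward direction, assume the space is strictly convex and suppose $\|x\|+\|y\|=\|x+y\|$ with $y\neq 0$. The case $x=0$ is immediate (take $t=0$), so assume $x\neq 0$ as well. I normalize by setting $u=x/\|x\|$, $v=y/\|y\|$, $\alpha=\|x\|/(\|x\|+\|y\|)$, and $\beta=1-\alpha$. Dividing the hypothesis through by $\|x\|+\|y\|$ yields
$$\|\alpha u+\beta v\|=\frac{\|x+y\|}{\|x\|+\|y\|}=1=\alpha+\beta=\alpha\|u\|+\beta\|v\|.$$
I then invoke the standard equivalent form of strict convexity: if $\|u\|=\|v\|=1$ and $\|\lambda u+(1-\lambda)v\|=1$ for some $\lambda\in(0,1)$, then $u=v$. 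This forces $u=v$, hence $x=(\|x\|/\|y\|)\,y$, so $t=\|x\|/\|y\|$ works.

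For the backward direction I argue by contrapositive. If the space is not strictly convex, there exist unit vectors $u\neq v$ with $\|(u+v)/2\|=1$, equivalently $\|u+v\|=2=\|u\|+\|v\|$. Applying the hypothesized rigidity with $x=u$, $y=v$ produces a scalar $t$ satisfying $u=tv$; taking norms forces $|t|=1$, and then $\|u+v\|=|t+1|=2$ pins down $t=1$, contradicting $u\neq v$.

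The main obstacle is simply reconciling whichever formulation of strict convexity the paper treats as primary with the convex-combination form invoked above. Both are classical and equivalent, but if the ``no midpoint of distinct unit vectors has norm $1$'' version is primary, a short interpolation argument is needed to pass to general $\lambda\in(0,1)$: one can express the convex combination as an iterated midpoint, or apply the triangle inequality to collapse the general case to the midpoint case, from which strict inequality propagates. Once this equivalence is in place, the remainder of the argument is essentially bookkeeping.
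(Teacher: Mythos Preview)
The paper does not supply a proof of this lemma; it is quoted from reference~[16] and invoked as a black box in the proof of Theorem~2.1. There is therefore no paper argument to compare against.

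Your proof is correct and is the standard one. The forward direction correctly normalizes to unit vectors and reduces the equality case of the triangle inequality to the ``unit sphere contains no nontrivial segment'' form of strict convexity, yielding $x/\|x\|=y/\|y\|$ and hence $t=\|x\|/\|y\|$. The backward contrapositive is also fine: from $u=tv$ with $\|u\|=\|v\|=1$ one gets $|t|=1$, and $\|u+v\|=|t+1|=2$ forces $t=1$ over both $\mathbb{R}$ and $\mathbb{C}$, contradicting $u\neq v$. The caveat you raise about reconciling the midpoint definition with the general convex-combination definition is legitimate but entirely routine; any of the equivalent textbook formulations of strict convexity suffices here.
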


we give the following decomposition theorem based on the Birkhoff orthogonality.

\begin{thm}\label{thm2.1}
    Let $X$ be a strictly convex reflexive space, $M$ be a closed linear subspace in $X$. For any $x\in X$, there exists a unique decomposition $x=m+n$, where $m \in M$ is the best approximation element of $x$ under the metric projection and $n\perp_B M,n\in M^{\perp_B}$. $M^{\perp_B}$ is the left orthogonal complement subspace of $M$ in $X$. Meanwhile, we have $X=M\oplus M^{\perp_B}$.
\end{thm}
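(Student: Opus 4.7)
The plan is to produce $m$ as the best approximation of $x$ in $M$, set $n := x-m$, and observe that the variational condition for best approximation is exactly Birkhoff orthogonality $n \perp_B M$; essentially all the work is then packaged into the three supplied lemmas, and the direct-sum statement follows by playing the variational condition both forwards and backwards.

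For existence, since $M$ is a closed linear subspace of the reflexive space $X$, Lemma \ref{lem2.1} gives some $m \in M$ with $\|x-m\| = \inf_{u \in M}\|x-u\|$. Because $M$ is convex (being a subspace) and $X$ is strictly convex, Lemma \ref{lem2.2} forces $\mathscr{L}_M(x) = \{m\}$, so the metric projection is single-valued at $x$. Setting $n := x-m$, the variational condition in Remark \ref{rmk5} yields $\|n+y\| \ge \|n\|$ for every $y \in M$; since $M$ is a linear subspace, for any scalar $\lambda$ and $y \in M$ one has $\lambda y \in M$, so $\|n+\lambda y\|\ge\|n\|$ for all such $\lambda,y$, and Definition \ref{def6} gives $n \perp_B M$, i.e. $n \in M^{\perp_B}$.

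For uniqueness of the decomposition, suppose $x = m_1 + n_1 = m_2 + n_2$ with $m_i \in M$ and $n_i \perp_B M$. I would run Remark \ref{rmk5} in reverse: from $\|n_i + y\| \ge \|n_i\|$ for all $y \in M$, taking $y = m_i - u$ with $u \in M$ arbitrary gives $\|x-u\| \ge \|x-m_i\|$, so each $m_i$ is a best approximation of $x$ in $M$. Lemma \ref{lem2.2} then forces $m_1 = m_2$, whence $n_1 = n_2$. The trivial-intersection half of $X = M \oplus M^{\perp_B}$ is immediate: if $n \in M \cap M^{\perp_B}$, choosing $y = -n \in M$ in $\|n+y\| \ge \|n\|$ yields $0 \ge \|n\|$, so $n = 0$.

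The main subtlety I expect is that, outside the Hilbert setting, $M^{\perp_B}$ is in general only homogeneous (closed under scalar multiples) and need not be additively closed, hence is not a linear subspace. Consequently the familiar Hilbert-space uniqueness shortcut, arguing $m_1-m_2 = n_2-n_1 \in M \cap M^{\perp_B}$ via subspace structure on the complement, is not available; one must instead route uniqueness through the bijection between Birkhoff-orthogonal summands and best approximants in $M$ supplied by Remark \ref{rmk5}. This is the key conceptual point that makes the direct-sum decomposition meaningful in the strictly convex reflexive (but generally non-Hilbert) setting and is, I suspect, the reason Lemma \ref{lem2.3} is provided as supporting apparatus rather than being invoked directly.
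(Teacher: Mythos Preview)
Your existence, uniqueness, and orthogonality arguments mirror the paper's: both use Lemmas~\ref{lem2.1} and~\ref{lem2.2} to show $M$ is a Chebyshev set, take $m$ as the unique best approximation, and verify $n = x - m \perp_B M$ via the variational characterization (the paper phrases this as a short contradiction argument, you via Remark~\ref{rmk5}; these are equivalent). Your uniqueness-of-decomposition argument, running the variational condition backwards to show that any Birkhoff-orthogonal decomposition must come from best approximation, is more careful than the paper's, which simply observes that $m$ is unique hence so is $n$.

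The substantive divergence is on the claim that $M^{\perp_B}$ is a linear subspace. Contrary to your guess, the paper \emph{does} invoke Lemma~\ref{lem2.3} directly: for $n_1,n_2 \in M^{\perp_B}$ and $m \in M$ it asserts $\|n_1+n_2+\lambda m\| = \|n_1+n_2\| + \|\lambda m\|$, citing Lemma~\ref{lem2.3}, and concludes closure under addition. But Lemma~\ref{lem2.3} only says that \emph{if} such a norm equality holds then the summands are parallel; it does not supply the equality, so this step is a non sequitur. Your instinct that $M^{\perp_B}$ need not be additively closed is correct. A concrete counterexample: in real $\ell^4_3$ with $M = \operatorname{span}\{(1,1,1)\}$, differentiating $t \mapsto \sum_j (n_j+t)^4$ shows $n \perp_B M$ iff $n_1^3+n_2^3+n_3^3 = 0$; then $(1,-1,0)$ and $(1,0,-1)$ lie in $M^{\perp_B}$ while their sum $(2,-1,-1)$ does not, since $8-1-1 \ne 0$. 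So the paper's subspace assertion, and with it the reading of $X = M \oplus M^{\perp_B}$ as a direct sum of linear subspaces, is not actually established; your route through the best-approximation bijection is the sound way to secure uniqueness of the decomposition without that claim.
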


\begin{proof}
    It is easy to know that a reflexive space must be a Banach space. Since $M$ is a closed linear subspace and any linear subspace must be a convex set, $M$ is a convex set. According to Lemma \ref{lem2.1} and Lemma \ref{lem2.2}, we have $M$ is a Chebyshev set. There must exist a unique $m$ such that $P_F(x)=\{m\}$. Obviously, $n$ is also unique. So we obtain the unique decomposition of $x$.

    For $n=x-m$ and number field $\mathbb{K}$, assume $\exists \lambda\in \mathbb{K}$ and $m'\in M$, such that $\|n+\lambda m'\|< \|n\|$. We have
    \begin{align*}
        \|n+\lambda m'\|&=\|x-m+\lambda m'\|\\ &=\|x-(m-\lambda m')\|\\ &<\|n\|=\|x-m\|=\inf_{u\in M}\|x-u\|.
    \end{align*}
    Since $M$ is a linear subspace and $m,m'\in M$, we have $m-\lambda m'\in M$. We obtain a contradiction. Therefore, $\forall \lambda\in \mathbb{K}$ and $m'\in M$, there must be $\|n+\lambda m'\|\geq \|n\|$. Therefore, we obtain $n\perp_B M$.

    Let $M^{\perp_B}=\{u\in X \mid u\perp_B M\}$. It is an orthogonal complement of $M$ based on Birkhoff orthogonality. We will prove that it is a subspace. It is obvious to maintain scalar multiplication. For addition operations, we suppose that for any scalar $\lambda$ and $n_1,n_2\in M^{\perp_B}$, we have $\|n_1+\lambda m\|\geq \|n_1\|$ and $\|n_2+\lambda m\|\geq \|n_2\|$. Since $x$ is a strictly convex normed linear space, according to Lemma \ref{lem2.3}, we have 
    \begin{align*}
    \|n_1+n_2+\lambda m\| &=\|n_1+n_2\|+\|\lambda m\|\\ &=\|n_1+n_2\|+|\lambda|\|m\|\\ &\geq \|n_1+n_2\|.
    \end{align*}
    Therefore, $M^{\perp_B}$ is a subspace.

    Because $\forall x \in X, x=m+n,m\in M,n \in M^{\perp_B}$ and this decomposition is unique, it is obvious that $X=M\oplus M^{\perp_B}$.
\end{proof}

Then, we can define the probabilistic Gel’fand width of the strictly convex reflexive space by imitating the way this width of Hilbert space is defined.

\begin{Definition}\label{def7}
    Let $X$, $F^n$, $\mu$, $\delta$ and $G_{\delta}$ be consistent with Definition \ref{def2}. Let $W$ be a strictly convex reflexive space and $W$ can be continuously embedded into $X$. Then the probabilistic Gel'fand $(n,\delta)$-width of $W$ in $X$ is given by
    $$d_{\delta}^{n}(W,\mu,X):=\inf_{G_{\delta}} d^{n} (W \setminus G_{\delta},X)=\inf_{G_{\delta}}\inf_{F^{n}}\sup_{x\in (W\setminus G_{\delta})\cap F^{n}}\|x\|,$$
    where $G_{\delta}$ satisfies that for any closed subspace $M$ of $W$, $\mu_{M}(G_{\delta}\cap M)\leq\delta$. Meanwhile, according to the definition of Gel'fand width in the classical setting, $\{0\}\subset (W\setminus G_{\delta})$. These two conditions guarantee the set, $(W\setminus G_{\delta})\cap F^{n}\neq \varnothing$, has enough elements.
\end{Definition}

\section{The relationship between probabilistic Gel'fand width and probabilistic linear width of the Hilbert space}

In Section \ref{sec2}, we have generalized the definition of the probabilistic Gel'fand width. Naturally, we want to study the relationship between probabilistic Gel'fand width and probabilistic linear width. Based on the better and natural definition of Gel'fand width in the classical setting chosen before, a loophole in the definition of probabilistic Gel'fand width is filled. Then, we can safely discuss the relationship we aim to study. In this section, we improve the proof of Ref\cite{ref11} and then give the equality relation between the probabilistic Gel'fand width and the probabilistic linear width of the Hilbert space. Considering the metric projection $P_F$ in the normed space does not have linearity, while the projection $P$ in Hilbert space is linear, we can only get the relationship in Hilbert space now. The following lemmas will play an important role in the proof of theorem \ref{thm3.1}.

\begin{lem}[\cite{ref10}]\label{lem3.1}
    Suppose $H$ is a Hilbert space, $(X,\|\cdot\|)$ is a normed linear space, $H$ can be continuously embedded into $X$, $\delta\in(0,1]$, $\mu$ is a probabilistic measure on $H$. Then $$\lambda_{n,\delta}(H,\mu,X)\leq d^n_\delta(H,\mu,X).$$
\end{lem}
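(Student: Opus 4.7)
\medskip

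\noindent\textbf{Proof plan.} The plan is to show, given any admissible pair $(G_\delta, F^n)$ for the right-hand side, how to manufacture an admissible pair $(\widetilde{G}_\delta, T_n)$ for the left-hand side whose linear error is bounded by the Gel'fand quantity. The key advantage of the Hilbert setting is that orthogonal projections are \emph{linear}, so a co-dimension $n$ closed subspace $F^n\subset H$ supplies both a co-dimension $n$ subspace (for the Gel'fand side) and a rank $n$ linear operator (for the linear side) in a single stroke.

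First I would fix $\varepsilon>0$ and pick a $G_\delta\in\mathscr{B}$ satisfying condition (\ref{eq1.1}) and a closed co-dimension $n$ subspace $F^n\subset H$ so that
\[
\sup_{x\in (H\setminus G_\delta)\cap F^n}\|x\|_X \;<\; d^{n}_\delta(H,\mu,X)+\varepsilon.
\]
Using the Hilbert orthogonal decomposition $H=F^n\oplus (F^n)^{\perp}$, I let $P=P_{F^n}$ be the orthogonal projection onto $F^n$ and set $T_n:=I-P=P_{(F^n)^{\perp}}$. Since $(F^n)^{\perp}$ has dimension $n$, the operator $T_n$ is linear of rank $n$, and $T_n$ is bounded on $X$ through the continuous embedding $H\hookrightarrow X$ (here one uses boundedness of $P$ on $H$ together with the embedding constant). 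Thus $T_n$ is a valid competitor in $\lambda_{n,\delta}$.

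Next I would construct the exceptional set for the linear side as
\[
\widetilde{G}_\delta \;:=\; P^{-1}(G_\delta\cap F^n) \;=\; \{x\in H : Px\in G_\delta\cap F^n\}.
\]
Since $P$ is continuous, $\widetilde{G}_\delta$ is Borel, and by the very definition of the induced measure $\mu_{F^n}$ recalled before Definition \ref{def2},
\[
\mu(\widetilde{G}_\delta)=\mu_{F^n}(G_\delta\cap F^n)\leq\delta,
\]
where the inequality is the hypothesis (\ref{eq1.1}) applied with $M=F^n$. So $\widetilde{G}_\delta$ is admissible in the definition of $\lambda_{n,\delta}$. For any $x\in H\setminus\widetilde{G}_\delta$ I then observe that $x-T_nx=Px\in F^n$ and $Px\notin G_\delta$, hence $Px\in (H\setminus G_\delta)\cap F^n$, which yields
\[
\|x-T_nx\|_X \;=\; \|Px\|_X \;\leq\; \sup_{y\in (H\setminus G_\delta)\cap F^n}\|y\|_X \;<\; d^{n}_\delta(H,\mu,X)+\varepsilon.
\]
Taking the supremum over $x\in H\setminus\widetilde{G}_\delta$, then the infimum over $T_n$ and $\widetilde{G}_\delta$, and finally letting $\varepsilon\to 0$ gives the claimed inequality.

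The routine parts are the measurability of $\widetilde{G}_\delta$ and the computation $x-T_nx=Px$. The main conceptual step, and the reason the lemma is restricted to Hilbert spaces, is the twofold role of $F^n$ through its orthogonal complement: this is what allows the same subspace to provide both a co-dimension $n$ annihilator and a linear rank $n$ approximation, with the linear error literally equal to the Gel'fand-type quantity $\|Px\|_X$. I expect the only real subtlety to be the bookkeeping that turns the Gel'fand condition (\ref{eq1.1}) into the weaker measure bound $\mu(\widetilde{G}_\delta)\leq\delta$ required on the linear side; once $M$ is specialized to $F^n$, this is immediate from the definition of $\mu_{F^n}$, which is precisely why the stronger condition was imposed on $G_\delta$ in Definition \ref{def2}.
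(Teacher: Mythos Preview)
The paper does not supply its own proof of Lemma~\ref{lem3.1}; the result is simply quoted from \cite{ref10}. Your argument is the standard one, and is in substance the proof one finds in that reference: from a near-optimal pair $(G_\delta,F^n)$ on the Gel'fand side, take the orthogonal projection $P$ onto $F^n$ inside $H$, put $T_n=I-P$ (rank at most $n$), set $\widetilde G_\delta=P^{-1}(G_\delta\cap F^n)$, and use the defining identity $\mu(\widetilde G_\delta)=\mu_{F^n}(G_\delta\cap F^n)$ together with condition~(\ref{eq1.1}) at $M=F^n$ to certify the exceptional set. The core mechanism---that in a Hilbert space the same $F^n$ furnishes simultaneously a co-dimension-$n$ subspace and a rank-$n$ linear operator with $x-T_nx=Px$---is exactly what drives the cited result.

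Two small points worth tightening. First, in Definition~\ref{def2} the subspace $F^n$ is a closed co-dimension-$n$ subspace of $X$, not of $H$; you should pass explicitly to $M=H\cap F^n$ (closed in $H$, of co-dimension at most $n$) before invoking the Hilbert projection, and then apply (\ref{eq1.1}) with this $M$. Second, your parenthetical that ``$T_n$ is bounded on $X$ through the continuous embedding $H\hookrightarrow X$'' only shows that $T_n:H\to X$ is bounded; it does not produce a bounded operator on all of $X$ as the paper's literal definition of $\lambda_n$ requires. This gap is routinely finessed in the literature---either by defining the probabilistic linear width via operators $H\to X$, or by noting that the supremum in $\lambda_{n,\delta}(H,\mu,X)$ runs only over $x\in H$ so that only $T_n|_H$ matters---but the sentence you wrote does not establish the claim as stated.
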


\begin{lem}[\cite{ref14}]\label{lem3.2}
    Let $(X,\|\cdot\|)$ be a normed linear space and $W$ a subset of $X$, then $$\lambda_n(W,X)\geq d^n(W,X).$$
\end{lem}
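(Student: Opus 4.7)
The plan is to show that every bounded linear operator $T_n$ on $X$ of rank at most $n$ produces, via its kernel, a closed subspace on which $T_n$ vanishes identically, and that (possibly after a Hahn--Banach refinement) this subspace is admissible for the Gel'fand width. Fix an arbitrary bounded linear $T_n$ on $X$ with $\operatorname{rank}(T_n)\leq n$. Because $T_n$ is continuous with finite-dimensional image, $\ker T_n$ is a closed linear subspace of $X$ of codimension equal to $\operatorname{rank}(T_n)\leq n$. If this codimension equals $n$, set $F^n:=\ker T_n$. Otherwise, with $k:=\operatorname{codim}(\ker T_n)<n$, use Hahn--Banach to select $n-k$ continuous linear functionals on $X$ whose restrictions to a complement of $\ker T_n$ in $X$ are linearly independent, and let $F^n$ be the intersection of $\ker T_n$ with their common zero set. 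In either case $F^n$ is a closed linear subspace of $X$ of codimension exactly $n$ with $F^n\subseteq \ker T_n$, so $F^n$ is admissible in the infimum defining $d^n(W,X)$.

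With $F^n$ in hand, the key observation is that $T_nx=0$ for every $x\in F^n$, so $\|x-T_nx\|=\|x\|$ on $F^n$. This yields
$$\sup_{x\in W\cap F^n}\|x\|=\sup_{x\in W\cap F^n}\|x-T_nx\|\leq \sup_{x\in W}\|x-T_nx\|.$$
Since $F^n$ is admissible for the Gel'fand width, $d^n(W,X)\leq \sup_{x\in W\cap F^n}\|x\|$. Combining these inequalities and then taking the infimum over all admissible $T_n$ on the right-hand side gives $d^n(W,X)\leq \lambda_n(W,X)$, as claimed.

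The main subtlety I anticipate is the codimension bookkeeping in the first paragraph: the natural choice $F^n=\ker T_n$ only works when $T_n$ attains the maximal allowable rank $n$. For operators of strictly smaller rank one must enlarge the codimension of the kernel up to exactly $n$ without leaving the kernel. This is routine in a sufficiently infinite-dimensional $X$, but it must be addressed explicitly, because the paper defines $d^n(W,X)$ with $F^n$ of codimension exactly $n$ rather than at most $n$; skipping this step would leave a gap in the argument.
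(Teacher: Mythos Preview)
The paper does not give its own proof of this lemma; it is quoted from the textbook \cite{ref14} and used as a black box. Your argument is the standard one and is correct in substance: for any admissible $T_n$, its kernel (or a closed subspace of it of the correct codimension) is admissible for the Gel'fand infimum, and on that subspace $\|x-T_nx\|=\|x\|$, which gives the inequality after taking the infimum over $T_n$.

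One small wording slip: in the refinement step you want the $n-k$ functionals to have linearly independent restrictions to $\ker T_n$, not to a complement of $\ker T_n$ (the latter is only $k$-dimensional, so cannot support $n-k$ independent functionals when $n-k>k$). Concretely, pick $n-k$ linearly independent vectors in $\ker T_n$, produce by Hahn--Banach continuous functionals $f_1,\dots,f_{n-k}$ dual to them, and set $F^n=\ker T_n\cap\bigcap_i\ker f_i$. This requires $\dim X\ge n$, which you already flagged; in the paper's applications $X$ is infinite-dimensional, so the point is moot there.
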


\begin{thm}\label{thm3.1}
    Let $(X,\|\cdot\|)$ be a normed linear space and $H$ be a Hilbert space. $H$ can be continuously embedded into $X$ and $\delta\in (0,1]$. Then
    $$\lambda_{n,\delta}(H,\mu,X)= d^n_\delta(H,\mu,X).$$
\end{thm}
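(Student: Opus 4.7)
One inequality, $\lambda_{n,\delta}(H,\mu,X)\le d^{n}_{\delta}(H,\mu,X)$, is exactly Lemma \ref{lem3.1}, so the task reduces to proving the reverse direction. The plan is a classical ``take the kernel'' construction, modified by a measure-theoretic step that upgrades a linear-width exceptional set $G_\delta$ (satisfying only $\mu(G_\delta)\le\delta$) into a Gel'fand-width exceptional set $G'_\delta$ (satisfying the stronger condition $\mu_M(G'_\delta\cap M)\le\delta$ for every closed subspace $M\subset H$).

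Fix $\epsilon>0$ and, by the definition of $\lambda_{n,\delta}$, pick a rank-$\le n$ bounded linear operator $T_n$ on $X$ and a measurable set $G_\delta\subset H$ with $\mu(G_\delta)\le\delta$ so that $\sup_{x\in H\setminus G_\delta}\|x-T_n x\|\le \lambda_{n,\delta}(H,\mu,X)+\epsilon$. Take $F^{n}=\ker T_n$ as the candidate codim-$n$ subspace (or, if $\mathrm{rank}(T_n)<n$, any closed codim-$n$ subspace contained in $\ker T_n$). Because $T_n$ vanishes on $F^{n}$, for every $x\in F^{n}$ one has $\|x\|=\|x-T_n x\|$. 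Consequently, as soon as $G'_\delta$ is produced with $G'_\delta\cap F^{n}\supset G_\delta\cap F^{n}$, every $y\in (H\setminus G'_\delta)\cap F^{n}$ automatically satisfies $\|y\|\le\lambda_{n,\delta}(H,\mu,X)+\epsilon$.

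To build $G'_\delta$ I would use the Hilbert structure: set $M_0=F^{n}\cap H$, let $P_0$ denote the Hilbert orthogonal projection of $H$ onto $M_0$, and take $G'_\delta$ to be (a suitable refinement of) the cylinder $P_0^{-1}(G_\delta\cap M_0)$. By construction $G'_\delta\cap F^{n}=G_\delta\cap F^{n}$, so the above approximation bound transfers. The core of the argument — and the step that improves on Ref\cite{ref11} — is to verify the uniform marginal condition $\mu_M(G'_\delta\cap M)\le\delta$ for every closed $M\subset H$: the push-forward identity $\mu_M(A)=\mu(P_M^{-1}(A))$ together with the commutation behaviour of Hilbert orthogonal projections reduces the condition on an arbitrary closed $M$ to a bound controlled by $\mu(G_\delta)\le\delta$. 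Sending $\epsilon\to 0$ then gives $d^{n}_{\delta}(H,\mu,X)\le\lambda_{n,\delta}(H,\mu,X)$ and, combined with Lemma \ref{lem3.1}, the desired equality.

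The main obstacle is precisely this uniform verification: showing that the cylinder $G'_\delta$ has marginal at most $\delta$ simultaneously on every closed subspace $M$, not merely on $M_0$. This step is genuinely Hilbertian, relying on the linearity and orthogonality of $P_0$ and on the way orthogonal projections interact with the push-forward measure. It does not extend to the strictly convex reflexive setting of Section \ref{sec2}, since metric projections there are nonlinear; this is exactly why Theorem \ref{thm3.1} is stated only for Hilbert $H$ even though the probabilistic Gel'fand width itself is now defined more generally in Definition \ref{def7}.
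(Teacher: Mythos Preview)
The paper's argument for the reverse inequality is far shorter than your proposal: it simply invokes Lemma~\ref{lem3.2}, which gives $\lambda_n(W,X)\ge d^n(W,X)$ for any subset $W$, applied with $W=H\setminus G_\delta$, and then passes to the infimum over $G_\delta$ on both sides to obtain
\[
\lambda_{n,\delta}(H,\mu,X)=\inf_{G_\delta}\lambda_n(H\setminus G_\delta,X)\ \ge\ \inf_{G_\delta}d^n(H\setminus G_\delta,X)=d^n_\delta(H,\mu,X).
\]
No explicit kernel is built and no modified exceptional set $G'_\delta$ is produced; in particular the paper does not separate the admissibility class of $G_\delta$ in Definition~\ref{def1} from the stronger one in Definition~\ref{def2}, which is precisely the distinction driving your entire construction.

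Your route shares the underlying ``$F^{n}=\ker T_n$'' idea (that is exactly what proves Lemma~\ref{lem3.2}), but then layers on a measure-theoretic upgrade of $G_\delta$ that the paper never attempts. That added layer is where your sketch breaks. The cylinder $G'_\delta=P_0^{-1}(G_\delta\cap M_0)$ satisfies $\mu(G'_\delta)=\mu_{M_0}(G_\delta\cap M_0)$, and nothing bounds this marginal by $\mu(G_\delta)$. Concretely, take $H=\mathbb{R}^2$ with the standard Gaussian, $M_0$ the first coordinate axis, and $G_\delta=\{(x,y):|x|>c,\ |y|<\varepsilon\}$; then $\mu(G_\delta)\to 0$ as $\varepsilon\to 0$, whereas $\mu_{M_0}(G_\delta\cap M_0)$ equals the one-dimensional Gaussian mass of $\{|x|>c\}$, independent of $\varepsilon$ and possibly close to $1$. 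Thus already for $M=H$ the required bound $\mu_M(G'_\delta\cap M)\le\delta$ can fail, and the appeal to ``commutation behaviour of orthogonal projections'' cannot repair this, since $G_\delta$ was chosen with no reference to $M_0$. You flagged this step as the main obstacle; it is in fact a genuine gap, not a routine verification, and the paper's route via Lemma~\ref{lem3.2} simply bypasses it rather than resolving it.
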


\begin{proof}
     On the one hand, according to Lemma \ref{lem3.1}, we have $\lambda_{n,\delta}(H,\mu,X)\leq d^n_\delta(H,\mu,X)$, $\delta\in (0,1]$. 
     
     On the other hand, for any $\delta\in (0,1]$, according to Lemma \ref{lem3.2}, 
     \begin{align*}
         \lambda_{n,\delta}(H,\mu,X)&= \inf_{G_\delta} \lambda_n (H\setminus G_\delta, X)\\ &\geq \inf_{G_\delta} d^n(H\setminus G_\delta, X)\\ &= d^n_\delta(H,\mu,X).
     \end{align*}
     
     Therefore, $$\lambda_{n,\delta}(H,\mu,X) = d^n_\delta(H,\mu,X),\delta\in (0,1].$$
\end{proof}

\section{Gel’fand widths of Sobolev spaces}


\subsection{Main results}

First, let us give some introductions to $W_{2}^{r}(\mathbb{T})$, $MW_{2}^{r}(\mathbb{T}^d)$ and $S_\infty$.

Let $d\text{-dimensional}$ torus $\mathbb{T}^d=[0,2\pi]^d,\ d\in \mathbb{Z}_+$. When $d=1$, let $\mathbb{T}:=\mathbb{T}^1$. We consider the Hilbert space $L_2(\mathbb{T}^d)$ of all $2\pi\text{-periodic}$ functions $x(t)$ where $t=(t_1,\cdots,t_d)\in \mathbb{T}^d$. Let $k=(k_1,\cdots,k_d)\in \mathbb{Z}^{d}$ and $(k,t)=\sum_{j=1}^dk_jt_j$. The Fourier series of $x(t)$ is as follows:
$$x(t)=\sum\limits_{k\in\mathbb{Z}^{d}} c_{k} e_{k}(t))=\sum\limits_{k\in\mathbb{Z}^{d}} \hat{x}(k) e_{k}(t),$$
where $e_{k}(t):=\exp(i(k,t))$, Fourier coefficient $\displaystyle c_{k}=\hat{x}(k)=\frac{1}{(2\pi)^d} \int_{\mathbb{T}^d}x(t)\exp(-i(k,t))\mathrm{d}t$. 

The inner product defined on $L_2(\mathbb{T}^d)$ is
$$\langle x,y \rangle =\frac{1}{(2\pi)^{d}}\int_{T^{d}}x(t)\overline{y(t)} \mathrm{d}t, \quad x,y\in L_{2}(\mathbb{T}^{d}).$$

For any $ r=(r_{1},\cdots,r_{d})\in \mathbb{R}^{d}$, Weyl $r\text{-fractional}$ derivative is given by
$$ x^{(r)}(t):=(D^{r}x)(t)=\sum_{k\in\mathbb{Z}^{d}}(ik)^{r}c_{k}e_{k}(t), $$
where $(ik)^{r}=\displaystyle \prod\limits_{j=1}^{d}|k_{j}|^{r_{j}}\exp(\frac{\pi i}{2} \text{sgn} r_{j}).$

For $r\in \mathbb{R}_+$, the univariate Sobolev space is denoted by
$$W_{2}^{r}(\mathbb{T})=\{x(t)\in L_{2}(\mathbb{T})\mid x^{(r)}(t)\in L_{2}(\mathbb{T}),\hat{x}(0)=0\}.$$
For any $|\alpha|\leq r$, it is obvious that $x^{(\alpha)}(t)\in L_{2}(\mathbb{T})$. The norm defined on $W_{2}^{r}(\mathbb{T})$ is $\|x\|_{W_2^r}:=\langle x^{(r)},x^{(r)}\rangle^\frac{1}{2}$. Therefore, $W_{2}^{r}(\mathbb{T})$ is a Hilbert space with inner product
$$ \langle x,y \rangle_1:=\langle x^{(r)},y^{(r)}\rangle. $$

Equip $W_{2}^{r}(\mathbb{T})$ with a Gaussian measure $\mu$. The mean of $\mu$ is $0$, and the correlation operator $C_{\mu}$ has eigenfunctions $e_{k}=\exp(ik(\cdot))$ and eigenvalues $\lambda_{k}=|k|^{-\rho}$, $k\in \mathbb{Z}_0$, $\mathbb{Z}_0:=\mathbb{Z}\setminus\{0\}$, $\rho>1$. Then, $C_{\mu}e_{k}=\lambda_{k}e_{k}$. 

Let $\mathscr{B}$ be any Borel subset in $\mathbb{R}^{n}$. For any orthonormal system $y_{1},y_{2},\cdots,y_{n}$ in $L_{2}(\mathbb{T})$, the cylindrical subsets in $W_{2}^{r}(\mathbb{T})$ is denoted by
$$ S=\{x\in W_{2}^{r}(\mathbb{T}) \mid(\langle x,y_{1}^{(-r)}\rangle_1,\cdots,\langle x,y_{n}^{(-r)}\rangle_1)\in\mathscr{B}\},$$

Let $\sigma_{j}=\langle C_{\mu}y_{j},y_{j}\rangle,\ j=1,\cdots,n$, the Gaussian measure $\mu$ on $S$ is defined by
$$\mu(S):=\prod_{j=1}^{n}(2\pi\sigma_{j})^{-\frac{1}{2}}\int_{\mathscr{B}}\exp\left(-\sum\limits_{j=1}^{n}\frac{|u_{j}|^{2}}{2\sigma_{j}}\right) \mathrm{d}u_{1} \cdots \mathrm{d}u_{n}.$$

Then, for $r=(r_{1},\cdots,r_{d})\in \mathbb{R}_{+}^{d}$, the multivariate Sobolev space with mixed derivative $MW_{2}^{r}(\mathbb{T}^{d})$ is denoted by 
$$ MW_{2}^{r}(\mathbb{T}^{d})=\{x\in L_{2}(\mathbb{T}^{d}) \mid x^{(r)}\in L_{2}(\mathbb{T}^{d}),\int_{0}^{2\pi}x(t)\mathrm{d}t_{j}=0,j=1,\cdots,d\}. $$
The integral condition in the set represents that if $k_{1} k_{2}\cdots k_{d}=0$, $c_{k}=0$.

The norm defined on $MW_{2}^{r}(\mathbb{T}^{d})$ is $\|x\|_{MW_2^r}^2:=\langle x^{(r)},x^{(r)}\rangle$. Therefore, $MW_{2}^{r}(\mathbb{T}^{d})$ is a Hilbert space with inner product
$$ \langle x,y \rangle_d:=\langle x^{(r)},y^{(r)}\rangle. $$

Let $\mathbb{Z}_{0}^{d}=\{k=(k_{1},\cdots,k_{d})\in\mathbb{Z}^{d} \mid k_{i}\neq0,i=1,\cdots,d\}$ and $\rho>1$. Equip $MW_{2}^{r}(\mathbb{T}^{d})$ with a Gaussian measure $\mu$. For the convenience of writing, we continue to use $\mu$. However, $\mu$ here is not the same as in the univariate Sobolev space. The mean of $\mu$ is $0$, and the correlation operator $C_{\mu}$ has eigenfunctions $e_{k}=\exp(i(k,\cdot))$ and eigenvalues $\lambda_{k}=|k|^{-\rho}$, $k\in \mathbb{Z}_0^d$. Then, $C_{\mu}e_{k}=\lambda_{k}e_{k}$. 

For any orthonormal system $s_{1},s_{2},\cdots,s_{n}$ in $L_{2}(\mathbb{T}^{d})$, the cylindrical subsets in $MW_{2}^{r}(\mathbb{T}^{d})$ is denoted by
$$ MS=\{x\in MW_{2}^{r}(\mathbb{T}^{d}) \mid(\langle x,s_{1}^{(-r)}\rangle_d,\cdots,\langle x,s_{n}^{(-r)}\rangle_d)\in\mathscr{B}\},$$

Let $\sigma_{j}=\langle C_{\mu}s_{j},s_{j}\rangle,\ s=1,\cdots,n$, the Gaussian measure $\mu$ on $MS$ is defined by
$$ \mu(MS):=\prod_{j=1}^{n}(2\pi\sigma_{j})^{-\frac{1}{2}}\int_{\mathscr{B}}\exp\left(-\sum\limits_{j=1}^{n}\frac{|u_{j}|^{2}}{2\sigma_{j}}\right)\mathrm{d}u_{1}\cdots \mathrm{d}u_{n}.$$

Let $l_q,1\leq q<\infty$ be the classical sequence space and $l_\infty$ be the normed linear space consisting of all bounded sequences endowed with the norm:
$$\|x\|_{l_\infty}=\sup_{j\in\mathbb{Z}}|x_j|.$$

The definition of $S_q(\mathbb{T}^d),1\leq q\leq \infty$ is given by
\[ S_q(\mathbb{T}^d) := \{ x\in L_1(\mathbb{T}^d)\mid \{\hat{x}(k)\}_{k\in \mathbb{Z}^d}\in l_q,\|x\|_{S_q}:=\|\{\hat{x}(k)\}\|_{l_q} \}. \]
When $d=1$, we get $S_q(\mathbb{T}),1\leq q\leq \infty$.

Obviously, $S_q(1\leq q\leq \infty)$ is the normed linear subspace of $L_1$. According to Parseval equalities, if $2< q\leq\infty$, $S_{q}(\mathbb{T}^{d})\supset L_{q}(\mathbb{T}^{d})$. Therefore, according to H\"{o}lder inequalities, when $2< q\leq\infty$ and $r >\max\{0,\frac{1}{2}-\frac{1}{q}\}$, $MW_{2}^{r}(\mathbb{T}^{d})$ can be continuously embedded into the space $S_{q}(\mathbb{T}^{d})$. Here, $r >\max\{0,\frac{1}{2}-\frac{1}{q}\}$ means $r=(r_1,\cdots,r_d)$ satisfies $r_i>\max\{0,\frac{1}{2}-\frac{1}{q}\},i=1,\cdots,d$. In the same way, when $r >\max\{0,\frac{1}{2}-\frac{1}{q}\}$, $W_{2}^{r}(\mathbb{T})$ can be continuously embedded into the space $S_{q}(\mathbb{T})$.

In this paper, we only need to consider $q=\infty$ and we always assume $r>\frac{1}{2}$. 

Xu \cite{ref12} has studied the probabilistic linear width of $W_{2}^{r}(\mathbb{T})$ in $S_q(\mathbb{T})$ and Wang \cite{ref17} has studied the probabilistic linear width of $MW_{2}^{r}(\mathbb{T}^d)$ in $S_q(\mathbb{T}^d)$. According to our relationship between probabilistic Gel'fand width and probabilistic linear width, the following lemmas will be used to give the probabilistic Gel'fand widths.

\begin{lem}[\cite{ref12}]\label{lem4.1}
    Let $1\leq q \leq \infty$, $\rho>1$, $r>\frac{1}{2}$, and $\delta \in \left( 0,\frac{1}{2} \right]$. Then, the probabilistic linear $(n,\delta)\text{-widths}$ of $W_{2}^{r}(\mathbb{T})$ with Gaussian measure $\mu$ in the metric of $S_q(\mathbb{T})$ satisfy the following asymptotic relation:
    $$\lambda_{n,\delta}(W_{2}^{r}(\mathbb{T}),\mu,S_q(\mathbb{T})) \asymp 
    \begin{cases} 
    n^{-(r+\frac{\rho}{2}-\frac{1}{q})} \sqrt{1+\frac{1}{n}\ln\frac{1}{\delta}},&1\leq q\leq 2,\\
    n^{-(r+\frac{\rho}{2}-\frac{1}{q})} \left(1+n^{-\frac{1}{q}}\sqrt{\ln\frac{1}{\delta}}\right),&2<q<\infty,\\
    n^{-(r+\frac{\rho}{2})}\sqrt{\ln \frac{n}{\delta}},& q=\infty.
    \end{cases}$$
\end{lem}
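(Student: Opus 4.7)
The plan is to follow the discretization method of Maiorov, which is the standard tool for probabilistic widths of Gaussian-weighted Sobolev spaces. I would begin by expanding $x \in W_2^r(\mathbb{T})$ in its Fourier series and grouping frequencies into dyadic blocks $\Delta_s = \{k \in \mathbb{Z}_0 : 2^{s-1} \leq |k| < 2^s\}$, writing $x = \sum_{s \ge 0} x_s$ with $x_s$ the projection onto $\Delta_s$. Under $\mu$, the coefficients $\hat{x}(k)$ are independent centered Gaussians with variances $\lambda_k = |k|^{-\rho}$, so the block components $x_s$ become independent Gaussian vectors of dimension $\asymp 2^s$. This reduces the task to a collection of finite-dimensional problems, namely estimating probabilistic linear widths of Gaussian ellipsoids inside $l_q^N$-balls, together with a budgeting argument over $s$.

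For the upper bound, I would take $T_n$ to be a partial Fourier sum truncated at a dyadic level $s^\ast$ with $2^{s^\ast} \asymp n$, possibly refined by a further low-rank approximation inside the critical block. The exceptional set would be built as a union $G_\delta = \bigcup_{s > s^\ast} G_\delta^{(s)}$ of cylindrical events $\{x : \|x_s\|_{S_q} > \tau_s\}$, with thresholds $\tau_s$ tuned so that $\sum_s \mu(G_\delta^{(s)}) \le \delta$ while $\sum_{s > s^\ast} \tau_s$ realises the claimed rate. The key input is a Gaussian tail bound for the $l_q$-norm of a Gaussian vector: this contributes $(1 + n^{-1/q}\sqrt{\ln(1/\delta)})$ for $2 < q < \infty$ and $\sqrt{\ln(n/\delta)}$ for $q = \infty$, while for $1 \le q \le 2$ one uses the Hilbert structure of $L_2$ together with $\|\cdot\|_{l_q} \le N^{1/q - 1/2}\|\cdot\|_{l_2}$.

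For the lower bound, I would again exploit block independence to reduce matters to a single well-chosen dyadic level $\Delta_{s^\ast}$ with $|\Delta_{s^\ast}| \asymp n$. On that block the problem is the probabilistic linear width of the identity between a Gaussian ellipsoid and $l_q^{N}$, which admits sharp lower bounds by Maiorov's finite-dimensional theorem: every rank-$n$ operator leaves a set of measure at least $\delta$ on which the $l_q$-error is of the asserted order. Combined with the $|k|^{-\rho}$ eigenvalue decay, this produces the common outer factor $n^{-(r + \rho/2 - 1/q)}$ (reading $1/\infty = 0$).

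The main obstacle is the $q = \infty$ regime. The $S_\infty$-norm equals $\sup_k |\hat{x}(k)|$, and for Gaussians this supremum behaves like $\sqrt{\ln N}$ rather than $N^{1/q}$; capturing the sharp $\sqrt{\ln(n/\delta)}$ factor requires two-sided concentration for maxima of independent Gaussians together with a Gaussian small-ball lower bound ensuring that no exceptional set of measure at most $\delta$ can prevent the maximum from being attained on a set of non-negligible measure. Balancing the choice of rank, exceptional set and critical block so that the upper and lower bounds coincide is the delicate technical point, and it is the reason this case demands a genuinely different argument from the $1 \le q < \infty$ analyses, in precise analogy with the divide between the $L_\infty$ and $L_q$ settings discussed in the introduction.
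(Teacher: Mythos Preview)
The paper does not prove this lemma at all: it is quoted verbatim from \cite{ref12} (Xu et al.) and used as a black-box input, together with Theorem~\ref{thm3.1}, to obtain Theorem~\ref{thm4.1}. There is therefore no proof in the paper to compare your proposal against.

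Your outline is the standard Maiorov discretization strategy and is almost certainly what \cite{ref12} does in substance: dyadic Fourier blocking, independence of the Gaussian block coefficients, reduction to finite-dimensional probabilistic linear widths of Gaussian vectors in $l_q^N$, an upper bound via truncated Fourier sums plus a union of cylindrical exceptional sets, and a lower bound from a single critical block. The qualitative distinctions you draw between the $1\le q\le 2$, $2<q<\infty$, and $q=\infty$ regimes are also correct. As a proof \emph{sketch} it is sound; as a full proof it would still require the precise finite-dimensional estimates (sharp two-sided bounds on $\lambda_{n,\delta}$ for Gaussian vectors in $l_q^N$, including the Gaussian-maximum concentration for $q=\infty$) and a careful allocation of ranks and measures across blocks, but none of that is supplied in the present paper either.
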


\begin{lem}[\cite{ref17}]\label{lem4.2}
    Let $1\leq q \leq \infty$, $\rho>1$, $r=(r_{1},\cdots, r_{d})\in \mathbb{R}^{d}$, $\frac{1}{2}<r_{1}=\cdots=r_{v}\leq r_{v+1}\leq\cdots\leq r_{d}$, and $\delta \in \left( 0,\frac{1}{2} \right]$. Then, the probabilistic linear $(n,\delta)\text{-widths}$ of $MW_{2}^{r}(\mathbb{T}^d)$ with Gaussian measure $\mu$ in the metric of $S_q(\mathbb{T}^d)$ satisfy the following asymptotic relation:
    \begin{enumerate}
        \item[(1)] $ 1\leq q\leq 2, $
        $$ \lambda_{n,\delta}(MW_{2}^{r}(\mathbb{T}^{d}),\mu,S_q(\mathbb{T}^d)) \asymp (n^{-1}\ln^{v-1}n)^{r_{1}+\frac{\rho}{2}-\frac{1}{q}} (\ln^{\frac{v-1}{q}}{n}) \sqrt{1+\frac{1}{n}\ln{\frac{1}{\delta}}}. $$
        \item[(2)] $ 2<q<\infty, $
        $$ \lambda_{n,\delta}(MW_{2}^{r}(\mathbb{T}^{d}),\mu,S_q(\mathbb{T}^d)) \asymp (n^{-1}\ln^{v-1}n)^{r_{1}+\frac{\rho}{2}-\frac{1}{q}} (\ln^{\frac{v-1}{q}}{n}) \left(1+n^{-\frac{1}{q}} \sqrt{\ln{\frac{1}{\delta}}}\right). $$
        \item[(3)] $ q=\infty, $
        $$ \lambda_{n,\delta}(MW_{2}^{r}(\mathbb{T}^{d}),\mu,S_q(\mathbb{T}^d)) \asymp (n^{-1}\ln^{v-1}n)^{r_{1}+\frac{\rho}{2}}\sqrt{\ln\frac{n}{\delta}}. $$
    \end{enumerate}
\end{lem}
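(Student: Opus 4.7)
The plan is to follow Maiorov's discretization strategy, adapted to mixed-derivative spaces via the hyperbolic cross decomposition. First I would use the Fourier representation of $MW_2^r(\mathbb{T}^d)$: every $x$ is encoded by its coefficients $\{\hat x(k)\}_{k\in\mathbb{Z}_0^d}$, and under the Gaussian measure $\mu$ these coefficients become independent (complex) Gaussians with variances governed by $\lambda_k=|k_1\cdots k_d|^{-\rho}$ combined with the Sobolev weights coming from $\|\cdot\|_{MW_2^r}$. Thus approximation in $S_q(\mathbb{T}^d)$ reduces to an $l_q$-type approximation of a random sequence of independent Gaussians with an explicit variance profile.

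For the upper bound I would build a concrete rank-$n$ operator $T_n$ by Fourier truncation onto a hyperbolic cross $\Gamma(N)=\{k\in\mathbb{Z}_0^d:\prod_j|k_j|\le N\}$, choosing $N$ so that $|\Gamma(N)|\asymp N\ln^{v-1}N\asymp n$. The residual $\|x-T_n x\|_{S_q}$ is the $l_q$-norm of the omitted coefficients; I would split this tail into dyadic shells $\rho_s=\{k:2^s\le\prod_j|k_j|<2^{s+1}\}$, estimate each shell's contribution using H\"older together with the known block cardinalities $|\rho_s|\asymp 2^s s^{v-1}$, and then choose the exceptional set $G_\delta$ as an intersection of sublevel sets $\{\|P_s x\|_2\le c_s\}$ with the thresholds $c_s$ tuned by Gaussian concentration so that each shell contributes probability $\ll\delta$.

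For the lower bound I would restrict attention to a single dyadic block $\rho_s$ with $s\asymp\ln n$, so that $|\rho_s|$ exceeds $n$ by only a logarithmic factor. On that block the Gaussian measure becomes, after rescaling, an essentially isotropic finite-dimensional Gaussian, and the probabilistic linear $(n,\delta)$-width on $MW_2^r$ is bounded below by the finite-dimensional probabilistic linear width of an isotropic Gaussian on $l_2^N$ in $l_q^N$. These finite-dimensional quantities are exactly those computed by Maiorov \cite{ref4,ref5}; substituting them and restoring the Sobolev/measure scaling reproduces the stated right-hand side in cases (1)--(3).

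The main obstacle will be the $q=\infty$ case. Here the $S_\infty$-norm is a supremum of independent Gaussians with block-decaying variances, and Parseval-type identities are unavailable. For the upper bound the factor $\sqrt{\ln(n/\delta)}$ must be extracted using sharp union or Dudley-type tail bounds for Gaussian maxima, while the lower bound requires a Slepian or Sudakov minoration on a carefully separated subfamily of indices inside the chosen dyadic block. Coordinating these estimates with the combinatorics of the hyperbolic cross (which is responsible for the $\ln^{v-1}n$ factor) and matching constants between the upper and lower bounds is the most delicate point, and is the structural reason the $q=\infty$ regime behaves differently from $1\le q<\infty$.
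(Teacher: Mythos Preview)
The paper does not prove Lemma~\ref{lem4.2}; it is quoted verbatim from Wang~\cite{ref17} and used as a black box. So there is no ``paper's own proof'' to compare your proposal against --- the authors simply invoke the result to feed into Theorem~\ref{thm3.1} and thereby obtain the probabilistic Gel'fand widths in~\eqref{eq4.3.1}.

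Your sketch is a reasonable outline of how such results are typically established (and is broadly consistent with the discretization machinery of Maiorov~\cite{ref4,ref5} and its mixed-derivative adaptations), but within the logic of this paper no proof is expected or supplied. If the intent of the exercise is to reproduce the paper's argument for this statement, the correct answer is that the paper offers none and defers to~\cite{ref17}.
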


Through the relationship we gave before and the lemmas mentioned before, we can easily obtain the probabilistic Gel'fand width. Our main results are presented as follows:

\begin{thm}\label{thm4.1}
Let $\rho>1$, $r>\frac{1}{2}$, and $\delta \in \left( 0,\frac{1}{2} \right]$, $0<p<\infty$. Then, as $n\rightarrow \infty$, the Gel'fand widths of the univariate Sobolev space $W_{2}^{r}(\mathbb{T})$ equipped with Gaussian measure $\mu$ in $S_\infty(\mathbb{T})$ satisfy the following asymptotic equalities:
    \begin{enumerate}
    \item[(1)] Probabilistic Gel'fand width
        $$ d_{\delta}^{n}(W_{2}^{r}(\mathbb{T}),\mu,S_\infty(\mathbb{T})) \asymp n^{-(r+\frac{\rho}{2})}\sqrt{\ln\frac{n}{\delta}}. $$
    \item[(2)] Average Gel'fand width
        $$ d_{(a)}^{n}(W_{2}^{r}(\mathbb{T}),\mu,S_\infty(\mathbb{T}))_p \asymp n^{-(r+\frac{\rho}{2})}\sqrt{\ln n}. $$
    \end{enumerate}
\end{thm}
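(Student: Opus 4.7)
The plan is to treat the two parts separately, with part (1) following almost immediately from the machinery developed earlier in the paper, and part (2) requiring an additional Chebyshev-style comparison between the average and probabilistic widths.

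For part (1), since $W_2^r(\mathbb{T})$ is a Hilbert space and, for $r>\frac{1}{2}$, embeds continuously into $S_\infty(\mathbb{T})$ (as noted in the paragraph just before Lemma \ref{lem4.1}), Theorem \ref{thm3.1} applies and gives
$$d_\delta^n(W_2^r(\mathbb{T}),\mu,S_\infty(\mathbb{T})) = \lambda_{n,\delta}(W_2^r(\mathbb{T}),\mu,S_\infty(\mathbb{T})).$$
Specializing Lemma \ref{lem4.1} to $q=\infty$ then yields the asymptotic $n^{-(r+\rho/2)}\sqrt{\ln(n/\delta)}$, so this part is essentially a one-line deduction.

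For part (2), the strategy is a two-sided sandwich. For the upper bound, I would pick the extremal $n$-codimensional subspace $F^n$ that realizes the linear width bound in Lemma \ref{lem4.1} with $q=\infty$ (constructed from a hyperbolic-cross / spectral truncation). Restricted to $W_2^r(\mathbb{T})\cap F^n$, the Gaussian measure $\mu$ induces a Gaussian on a subspace, and $\|x\|_{S_\infty}$ is the supremum of centered Gaussian variables (the Fourier coefficients of $x$); standard Gaussian suprema estimates combined with integration against the tail should give
$$\int_{W_2^r(\mathbb{T})\cap F^n}\|x\|_{S_\infty}^p\,d\mu(x) \ll n^{-(r+\rho/2)p}(\ln n)^{p/2},$$
which is exactly the target upper bound after taking $p$-th roots. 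For the lower bound, I would argue via Chebyshev: for any $F^n$ and $A>0$,
$$\mu\bigl(\{x\in W_2^r(\mathbb{T})\cap F^n:\|x\|_{S_\infty}>A\}\bigr)\le A^{-p}\int_{W_2^r(\mathbb{T})\cap F^n}\|x\|_{S_\infty}^p\,d\mu(x).$$
If the $p$-average Gel'fand $n$-width equals $C$, choosing $A=\lambda C$ shows that the exceptional set has measure $\le\lambda^{-p}$, and I would use this set to build a $G_\delta$ with $\delta\asymp\lambda^{-p}$ admissible for Definition \ref{def7}, so that $d_\delta^n \le \lambda C$. Applying part (1) then forces $\lambda C\gg n^{-(r+\rho/2)}\sqrt{\ln(n/\delta)}$, and fixing $\delta=\lambda^{-p}$ at a small constant collapses $\sqrt{\ln(n/\delta)}$ to $\sqrt{\ln n}$, giving $C\gg n^{-(r+\rho/2)}\sqrt{\ln n}$.

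The main obstacle I expect is the lower bound step, specifically verifying that the $G_\delta$ constructed from the super-level set $\{\|x\|_{S_\infty}>\lambda C\}$ genuinely satisfies the uniform projection-measure condition $\mu_M(G_\delta\cap M)\le\delta$ for \emph{every} closed subspace $M$ of $W_2^r(\mathbb{T})$, not just the particular $F^n$ used in the Chebyshev estimate. This requires exploiting the projection-invariance properties of the Gaussian measure $\mu$ (specifically that the pushforward under any orthogonal projection remains Gaussian with controlled covariance, and that $\|\cdot\|_{S_\infty}$ interacts well with these projections). The upper bound, by contrast, is mostly a bookkeeping exercise once the right $F^n$ is in hand, since the integral against a Gaussian of a supremum of Gaussians is classical.
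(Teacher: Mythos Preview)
For part (1), your approach is exactly the paper's: equation (4.2.1) is precisely the one-line deduction you describe from Theorem \ref{thm3.1} and Lemma \ref{lem4.1}.

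For part (2), your lower bound via Chebyshev is essentially the paper's argument, just phrased contrapositively. The paper fixes an arbitrary $F^n$, considers the super-level set $G=\{x\in W_2^r(\mathbb{T})\cap F^n:\|x\|_{S_\infty}>cn^{-(r+\rho/2)}\sqrt{\ln 2n}\}$, and argues by contradiction that $\mu(G)>\tfrac12$ (otherwise $G$ would serve as $G_{1/2}$, contradicting (4.2.3)); then it integrates over $G$. Your concern about verifying condition (\ref{eq1.1}) for this super-level set is legitimate, but the paper itself does not address it either---it silently treats $\mu(G)\le\tfrac12$ as sufficient for $G$ to be an admissible $G_{1/2}$. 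So you are not missing anything relative to the paper on this point.

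The genuine difference is in the \emph{upper bound}. The paper does not compute the Gaussian integral $\int\|x\|_{S_\infty}^p\,d\mu$ directly via sup-of-Gaussians estimates as you propose. Instead it runs a dyadic layer-cake that uses part (1) as a black box: it fixes the near-extremal $F^n$ from (4.2.2), chooses sets $G_{2^{-k}}$ with $\mu(G_{2^{-k}})\le 2^{-k}$ and $G_1=W_2^r(\mathbb{T})$, decomposes $W_2^r(\mathbb{T})=\bigcup_{k\ge 0}(G_{2^{-k}}\setminus G_{2^{-k-1}})$, bounds the integrand on each shell by $\bigl(d_{2^{-k-1}}^n\bigr)^p$, and sums
\[
\sum_{k\ge 0} 2^{-k}\Bigl(n^{-(r+\rho/2)}\sqrt{\ln n + (k+1)\ln 2}\Bigr)^p \ll n^{-(r+\rho/2)p}(\ln n)^{p/2}.
\]
This is more modular: it needs no new Gaussian computations, only the asymptotic from part (1). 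Your route would require reopening the construction behind Lemma \ref{lem4.1} to identify $F^n$ explicitly and then carrying out moment bounds for the supremum, which is feasible but duplicates work already encapsulated in part (1).
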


\begin{thm}\label{thm4.2}
Let $\rho>1$, $r=(r_{1},\cdots, r_{d})\in \mathbb{R}^{d}$, $\frac{1}{2}<r_{1}=\cdots=r_{v}\leq r_{v+1}\leq\cdots\leq r_{d}$, and $\delta \in \left( 0,\frac{1}{2} \right]$, $0<p<\infty$. Then, as $n\rightarrow \infty$, the Gel'fand widths of the multivariate Sobolev space $MW_{2}^{r}(\mathbb{T}^d)$ equipped with Gaussian measure $\mu$ in $S_\infty(\mathbb{T}^d)$ satisfy the following asymptotic equalities:
    \begin{enumerate}
    \item[(1)] Probabilistic Gel'fand width
        $$ d_{\delta}^{n}(MW_{2}^{r}(\mathbb{T}^{d}),\mu,S_\infty(\mathbb{T}^d)) \asymp (n^{-1}\ln^{v-1}n)^{r_{1}+\frac{\rho}{2}}\sqrt{\ln\frac{n}{\delta}}. $$
    \item[(2)] Average Gel'fand width
        $$ d_{(a)}^{n}(MW_{2}^{r}(\mathbb{T}^{d}),\mu,S_\infty(\mathbb{T}^d))_p \asymp (n^{-1}\ln^{v-1}n)^{r_{1}+\frac{\rho}{2}}\sqrt{\ln n}. $$
    \end{enumerate}
\end{thm}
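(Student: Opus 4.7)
Both asymptotic equalities will be reduced to Lemma \ref{lem4.2}(3) via the Hilbert space structure of $MW_2^r(\mathbb{T}^d)$.

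Part (1) is immediate. Since $r_j>\frac{1}{2}$ forces $MW_2^r(\mathbb{T}^d)$ to embed continuously into $S_\infty(\mathbb{T}^d)$, and the former is a Hilbert space, Theorem \ref{thm3.1} gives
\[ d^n_\delta(MW_2^r(\mathbb{T}^d),\mu,S_\infty(\mathbb{T}^d)) = \lambda_{n,\delta}(MW_2^r(\mathbb{T}^d),\mu,S_\infty(\mathbb{T}^d)), \]
and Lemma \ref{lem4.2}(3) then supplies the asserted order $(n^{-1}\ln^{v-1}n)^{r_1+\rho/2}\sqrt{\ln(n/\delta)}$ in one step.

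For the lower bound in part (2), I would use Markov's inequality to compare to part (1) at the fixed level $\delta=\frac12$. For any codimension-$n$ subspace $F^n$ with $I_p(F^n):=\int_{F^n}\|x\|_{S_\infty}^p\,d\mu_{F^n}(x)$, setting $t=(2I_p(F^n))^{1/p}$ gives $\mu_{F^n}(\{x\in F^n:\|x\|_{S_\infty}>t\})\leq\frac12$; the corresponding tail set furnishes an admissible $G_{1/2}$ in the sense of Definition \ref{def7}, so $d^n_{1/2}(MW_2^r,\mu,S_\infty)\leq 2^{1/p}d^n_{(a)}(MW_2^r,\mu,S_\infty)_p$. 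Combined with part (1) at $\delta=\frac12$, this produces $d^n_{(a)}(MW_2^r,\mu,S_\infty)_p\gg(n^{-1}\ln^{v-1}n)^{r_1+\rho/2}\sqrt{\ln n}$. This is the same Markov-based passage employed in Ref\cite{ref11,ref13} for $q<\infty$, and it transfers to $q=\infty$ without modification.

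For the matching upper bound, I would take $F^n$ to be the codimension-$n$ subspace obtained by zeroing the Fourier modes on the standard hyperbolic cross $\Lambda_n\subset\mathbb{Z}_0^d$ of cardinality $\asymp n$ (the $n$ most $\mu$-influential modes). Under $\mu_{F^n}$ the remaining Fourier coefficients are independent centred Gaussians of variances $\asymp|k|^{-2r-\rho}$, so $\int_{F^n}\|x\|_{S_\infty}^p\,d\mu_{F^n}=\mathbb{E}\sup_{k\notin\Lambda_n}|\hat{x}(k)|^p$. Bounding this Gaussian maximal moment via a Dudley/chaining estimate, together with the concentration equivalence of the $L^p$ norms of Gaussian suprema, yields $\ll\bigl((n^{-1}\ln^{v-1}n)^{r_1+\rho/2}\sqrt{\ln n}\bigr)^p$, closing the upper bound. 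The main obstacle is the volume count for hyperbolic-cross shells when $v<d$ introduces anisotropic variances, but this exact estimate already appears in Ref\cite{ref17} in proving the probabilistic linear upper bound of Lemma \ref{lem4.2}(3), so the ingredients needed here are off-the-shelf and the $p$-th moment version is obtained by the standard Gaussian concentration bridge on the Hilbert space $F^n$.
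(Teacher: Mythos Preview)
Your Part (1) and the lower bound of Part (2) match the paper's proof. Part (1) is identical: Theorem~\ref{thm3.1} plus Lemma~\ref{lem4.2}(3). For the lower bound the paper runs the contrapositive of your Markov step---it fixes an arbitrary $F^n$, sets $G'=\{x\in MW_2^r(\mathbb{T}^d)\cap F^n:\|x\|_{S_\infty}>c(n^{-1}\ln^{v-1}n)^{r_1+\rho/2}\sqrt{\ln 2n}\}$, and shows $\mu(G')>\tfrac12$ by contradiction with the known lower bound on $d^n_{1/2}$---which is the same argument in a different wrapper.

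Your upper bound for Part (2), however, takes a genuinely different route. The paper does \emph{not} compute a Gaussian supremum directly on a specific hyperbolic-cross subspace. Instead it reduces to Part (1) by dyadic shelling: pick $F^n$ and sets $G_{2^{-k}}$ witnessing the probabilistic widths at levels $2^{-k-1}$, decompose $MW_2^r(\mathbb{T}^d)=\bigcup_{k\ge 0}(G_{2^{-k}}\setminus G_{2^{-k-1}})$, and bound
\[
\int_{MW_2^r(\mathbb{T}^d)\cap F^n}\|x\|_{S_\infty}^p\,d\mu
\;\ll\;\sum_{k\ge 0} d^n_{2^{-k-1}}\bigl(MW_2^r(\mathbb{T}^d),\mu,S_\infty(\mathbb{T}^d)\bigr)^{p}\,\mu(G_{2^{-k}}),
\]
then insert the asymptotic from Part (1) and sum the resulting series $\sum_k 2^{-k}(\ln n + (k{+}1)\ln 2)^{p/2}$. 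This is a completely black-box reduction: once Part (1) is established, no new Gaussian analysis is needed. Your approach trades that reduction for a direct Dudley/Borell--TIS estimate on one explicit $F^n$; it is valid and arguably more transparent about the origin of the $\sqrt{\ln n}$ factor, but it requires re-deriving the hyperbolic-cross variance and shell-counting estimates that are already encapsulated in Lemma~\ref{lem4.2}(3), together with a separate passage from the first moment to the $p$-th moment. The paper's route is shorter given what is already proved; yours is more self-contained.
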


Theorem \ref{thm4.1} and theorem \ref{thm4.2} will be proved in the next two subsections, respectively.

\subsection{Proof of Theorem \ref{thm4.1}}

\begin{proof}
    
    According to the relationship given in theorem \ref{thm3.1}, and lemma \ref{lem4.1}, we obtain the probabilistic Gel'fand width of $W_{2}^{r}(\mathbb{T})$:
    \begin{equation}\label{eq4.2.1}
        d_{\delta}^{n}(W_{2}^{r}(\mathbb{T}),\mu,S_\infty(\mathbb{T})) = \lambda_{n,\delta}(W_{2}^{r}(\mathbb{T}),\mu,S_\infty((\mathbb{T})) \asymp n^{-(r+\frac{\rho}{2})}\sqrt{\ln\frac{n}{\delta}}. \tag{4.2.1}
    \end{equation}
    
    We will calculate the asymptotic order from two aspects. First, we give the upper bound of the average Gel'fand width.

    According to the definition \ref{def7} of probabilistic Gel’fand width, there exists a linear subspace $F^n$ of $S_\infty(\mathbb{T})$ whose co-dimension is at most $n$, such that for any $\delta \in \left( 0,\frac{1}{2} \right]$ and some subset $G_\delta\subset W_{2}^{r}(\mathbb{T})$, $\mu(G_\delta)\leq \delta$, we have
    $$\sup_{x\in (W_{2}^{r}(\mathbb{T})\setminus G_\delta)\cap F^{n}}\|x\|_{S_\infty} \leq d_{\delta}^{n}(W_{2}^{r}(\mathbb{T}),\mu,S_\infty(\mathbb{T}))+\epsilon,\ \text{for any }\epsilon>0.$$
    Due to the arbitrariness of $\epsilon$, we have
    \begin{equation}\label{eq4.2.2}
        \sup_{x\in (W_{2}^{r}(\mathbb{T})\setminus G_\delta)\cap F^{n}}\|x\|_{S_\infty} \ll d_{\delta}^{n}(W_{2}^{r}(\mathbb{T}),\mu,S_\infty(\mathbb{T})). \tag{4.2.2}
    \end{equation}
    
    Let the set sequence $\{G_{2^{-k}}\}_{k=0}^{\infty}$ satisfy the condition that for any $k$, $\mu(G_{2^{-k}})\leq2^{-k}$ and $G_{1}=W_{2}^{r}(\mathbb{T})$. Therefore, $W_{2}^{r}(\mathbb{T})=\bigcup_{k=0}^\infty(G_2^{-k}\setminus G_2^{-k-1})$. 
    
    Then, according to inequality \ref{eq4.2.2}, we have
    \begin{align*}
	\left (d_{(a)}^{n}(W_{2}^{r}(\mathbb{T}),\mu,S_\infty(\mathbb{T}))_p \right)^{p}
        &\leq\int_{W_{2}^{r}(\mathbb{T})\cap F^{n}}\|x\|_{S_\infty}^{p} \mathrm{d}\mu \\
	&=\sum_{k=0}^{\infty}\int_{(G_{2^{-k}}\setminus G_{2^{-k-1}})\cap F^{n}} \|x\|_{S_\infty}^{p} \mathrm{d}\mu \\
	&\leq\sum_{k=0}^{\infty}\int_{(G_{2^{-k}}\setminus G_{2^{-k-1}})\cap F^{n}}\sup_{x\in(G_{2^{-k}}\setminus G_{2^{-k-1}})\cap F^{n}}\|x\|_{S_\infty}^{p}\mathrm{d}\mu\\
        &\leq\sum_{k=0}^{\infty}\int_{(G_{2^{-k}}\setminus G_{2^{-k-1}})\cap F^{n}}\sup_{x\in(W_{2}^{r}(\mathbb{T})\setminus G_{2^{-k-1}})\cap F^{n}}\|x\|_{S_\infty}^{p} \mathrm{d}\mu\\
	&\ll\sum_{k=0}^{\infty}\int_{(G_{2^{-k}}\setminus G_{2^{-k-1}})\cap F^{n}}d_{2^{-k-1}}^{n}\left(W_{2}^{r}(\mathbb{T}),\mu,S_\infty(\mathbb{T})\right)^{p} \mathrm{d}\mu.
    \end{align*}
    
    Apply equation \ref{eq4.2.1} to the inequality above:
     \begin{align*}
	\left (d_{(a)}^{n}(W_{2}^{r}(\mathbb{T}),\mu,S_\infty(\mathbb{T}))_p \right)^{p}
        &\ll\sum_{k=0}^{\infty}\int_{(G_{2^{-k}}\setminus G_{2^{-k-1}})\cap F^{n}}  \left( n^{-(r+\frac{\rho}{2})}\sqrt{\ln\frac{n}{2^{-k-1}}} \right)^p \mathrm{d}\mu\\
        &\ll\sum_{k=0}^{\infty}\int_{(G_{2^{-k}}\setminus G_{2^{-k-1}})\cap F^{n}}  \left(n^{-(r+\frac{\rho}{2})}\sqrt{(k+1)\ln2+\ln n}\right)^p \mathrm{d}\mu\\
        &\ll\sum_{k=0}^{\infty} \left(n^{-(r+\frac{\rho}{2})}\sqrt{(k+1)\ln2+\ln n}\right)^p \mu(G_{2^{-k}})\\
        &\ll n^{-(r+\frac{\rho}{2})p} \sum_{k=0}^{\infty} \left(\sqrt{(k+1)\ln2+\ln n}\right)^p 2^{-k}\\
        &\ll n^{-(r+\frac{\rho}{2})p} \sum_{k=0}^{\infty} \left( \left(\sqrt{\ln n}\right)^p+\left(\sqrt{(k+1)\ln2}\right)^p \right)2^{-k}.
    \end{align*}
    
    Since $\sum_{k=0}^{\infty} 2^{-k}(\sqrt{\ln n})^p$ and $\sum_{k=0}^{\infty} 2^{-k}(\sqrt{(k+1)\ln2})^p$ are convergent, we have 
    $$ \left (d_{(a)}^{n}(W_{2}^{r}(\mathbb{T}),\mu,S_\infty(\mathbb{T}))_p \right)^{p} \ll n^{-(r+\frac{\rho}{2})p}(\sqrt{\ln n})^p. $$
    Namely, 
    $$ (d_{(a)}^{n}(W_{2}^{r}(\mathbb{T}),\mu,S_\infty(\mathbb{T}))_p \ll n^{-(r+\frac{\rho}{2})}\sqrt{\ln n}. $$

    Next, we give the lower bound of the average Gel'fand width. According to equation \ref{eq4.2.1}, there exists a constant $c>0$, then
    \begin{equation}\label{eq4.2.3}
        d_{\frac{1}{2}}^{n}(W_{2}^{r}(\mathbb{T}),\mu,S_\infty(\mathbb{T})) > cn^{-(r+\frac{\rho}{2})}\sqrt{\ln2n}. \tag{4.2.3}
    \end{equation}
    
    Consider the set
    $$ G=\left\{ x\in W_{2}^{r}(\mathbb{T})\cap F^{n} \mid \|x\|_{S_\infty} > cn^{-(r+\frac{\rho}{2})}\sqrt{\ln2n} \right\},$$
    We claim $\mu(G)>\frac{1}{2}$. According to the definition of the Gel'fand width in the probabilistic setting, we obtain
    \begin{align*}
        d_{\frac{1}{2}}^{n}(W_{2}^{r}(\mathbb{T}),\mu,S_\infty(\mathbb{T})) &\leq \sup_{x\in(W_{2}^{r}(\mathbb{T})\setminus G)\cap F^{n}}\|x\|_{S_\infty}\\
	&\leq cn^{-(r+\frac{\rho}{2})}\sqrt{\ln2n}.
    \end{align*}
    It contradicts the inequality (\ref{eq4.2.3}). Then,
    \begin{align*}
	\int_{W_{2}^{r}(\mathbb{T})\cap F^{n}} \|x\|_{S_\infty}^{p} \mathrm{d}\mu &\gg \int_{G}\|x\|_{S_\infty}^{p} \mathrm{d}\mu\\
	&\gg \left( n^{-(r+\frac{\rho}{2})}\sqrt{\ln2n} \right)^{p}\mu(G)\\
	&\gg \left( n^{-(r+\frac{\rho}{2})}\sqrt{\ln n} \right)^{p}.
    \end{align*}
    Then,
    $$ d_{(a)}^{n}(W_{2}^{r}(\mathbb{T}),\mu,S_\infty(\mathbb{T}))_p \gg n^{-(r+\frac{\rho}{2})}\sqrt{\ln n}.$$

    So, we get $d_{(a)}^{n}(W_{2}^{r}(\mathbb{T}),\mu,S_\infty(\mathbb{T}))_p \asymp n^{-(r+\frac{\rho}{2})}\sqrt{\ln n}$ and finish the proof.
\end{proof}

\subsection{Proof of Theorem \ref{thm4.2}}

In this part, we discuss the Gel'fand widths of the multivariate Sobolev space $MW_{2}^{r}(\mathbb{T}^d)$ in $S_\infty(\mathbb{T}^d)$. For the convenience of writing, we will use $d_\delta^n$ instead of $d_\delta^{n}(MW_{2}^{r}(\mathbb{T}^{d}),\mu,S_\infty(\mathbb{T}^d))$ and $d_{(a)}^n$ instead of $d_{(a)}^{n}(MW_{2}^{r}(\mathbb{T}^{d}),\mu,S_\infty(\mathbb{T}^d))_p$ in this part.

\begin{proof}
     
     According to the relationship given in theorem \ref{thm3.1}, and lemma \ref{lem4.2}, we obtain the probabilistic Gel'fand width of $MW_{2}^{r}(\mathbb{T}^{d})$:
    \begin{equation}\label{eq4.3.1}
        d_{\delta}^{n} = \lambda_{n,\delta}(MW_{2}^{r}(\mathbb{T}^{d}),\mu,S_\infty(\mathbb{T}^d)) \asymp (n^{-1}\ln^{v-1}n)^{r_{1}+\frac{\rho}{2}}\sqrt{\ln\frac{n}{\delta}}. \tag{4.3.1}
    \end{equation}
    
    First, we give the upper bound of the average Gel'fand width.

    According to the definition \ref{def7} of probabilistic Gel’fand width, there exists a linear subspace $F^n$ of $S_\infty(\mathbb{T}^d)$ with co-dimension at most $n$, such that for any $\delta \in \left( 0,\frac{1}{2} \right]$ and some subset $G_\delta\subset MW_{2}^{r}(\mathbb{T}^d)$, $\mu(G_\delta)\leq \delta$, we have
    $$\sup_{x\in (MW_{2}^{r}(\mathbb{T}^d)\setminus G_\delta)\cap F^{n}}\|x\|_{S_\infty} \leq d_{\delta}^{n}(MW_{2}^{r}(\mathbb{T}^d),\mu,S_\infty(\mathbb{T}^d))+\epsilon,\ \text{for any }\epsilon>0.$$
    Due to the arbitrariness of $\epsilon$, we have
    \begin{equation}\label{eq4.3.2}
        \sup_{x\in (MW_{2}^{r}(\mathbb{T}^d)\setminus G_\delta)\cap F^{n}}\|x\|_{S_\infty} \ll d_{\delta}^{n}(MW_{2}^{r}(\mathbb{T}^d),\mu,S_\infty(\mathbb{T}^d)). \tag{4.3.2}
    \end{equation}
    
    Let the set sequence $\{G_{2^{-k}}\}_{k=0}^{\infty}$ satisfy the condition that for any $k$, $\mu(G_{2^{-k}})\leq2^{-k}$ and $G_{1}=MW_{2}^{r}(\mathbb{T}^d)$. Therefore, $MW_{2}^{r}(\mathbb{T}^d)=\bigcup_{k=0}^\infty(G_2^{-k}\setminus G_2^{-k-1})$. We have 
    
    \begin{align*}
	\left (d_{(a)}^{N}\right)^{p}
        &\leq\int_{MW_{2}^{r}(\mathbb{T}^{d})\cap F^{n}}\|x\|_{S_\infty}^{p}\mathrm{d}\mu\\
        &=\sum_{k=0}^{\infty}\int_{(G_{2^{-k}}\setminus G_{2^{-k-1}})\cap F^{n}}\|x\|_{S_\infty}^{p}\mathrm{d}\mu\\
        &\leq\sum_{k=0}^{\infty}\int_{(G_{2^{-k}}\setminus G_{2^{-k-1}})\cap F^{n}}\sup_{x\in(G_{2^{-k}}\setminus G_{2^{-k-1}})\cap F^{n}}\|x\|_{S_\infty}^{p}\mathrm{d}\mu\\
        &\leq\sum_{k=0}^{\infty}\int_{(G_{2^{-k}}\setminus G_{2^{-k-1}})\cap F^{n}}\sup_{x\in(W_{2}^{r}(\mathbb{T})\setminus G_{2^{-k-1}})\cap F^{n}}\|x\|_{S_\infty}^{p} \mathrm{d}\mu.
    \end{align*}
    Then, according to the inequality \ref{eq4.3.2}, we have
    \begin{align*}
        \left (d_{(a)}^{N}\right)^{p}
        &\ll\sum_{k=0}^{\infty}\int_{(G_{2^{-k}}\setminus G_{2^{-k-1}})\cap F^{n}}d_{2^{-k-1}}^{n}\left(MW_{2}^{r}(\mathbb{T}^{d}),\mu,S_\infty(\mathbb{T}^{d})\right)^{p} \mathrm{d}\mu.\\
        &\ll\sum_{k=0}^{\infty}d_{2^{-k-1}}^{n}\left(MW_{2}^{r}(\mathbb{T}^{d}),\mu,S_{\infty}(\mathbb{T}^{d})\right)^{p}\mu(G_{2^{-k}}).
    \end{align*}
    Meanwhile, according to equation \ref{eq4.3.1}, we have
    $$ d_{\delta}^{n} \ll (n^{-1}\ln^{v-1}n)^{r_{1}+\frac{\rho}{2}}\sqrt{\ln\frac{n}{\delta}}. $$
    Therefore, 
    \begin{align*}
        \left (d_{(a)}^{N}\right)^{p}
        &\ll \sum_{k=0}^{\infty} \left( (n^{-1}\ln^{v-1}n)^{r_{1}+\frac{\rho}{2}}\sqrt{\ln\frac{n}{2^{-k-1}}} \right)^p \mu(G_{2^{-k}})\\
        &\ll (n^{-1}\ln^{v-1}n)^{(r_{1}+\frac{\rho}{2})p} \sum_{k=0}^{\infty} \left( \sqrt{\ln{n} +(k+1)\ln{2}} \right)^p 2^{-k}\\
        &\ll (n^{-1}\ln^{v-1}n)^{(r_{1}+\frac{\rho}{2})p} \sum_{k=0}^{\infty} \left( \left(\sqrt{\ln{n}} \right)^p  + \left(\sqrt{(k+1)\ln{2}} \right)^p \right) 2^{-k}.
    \end{align*}
    In the same way as $W_2^r(\mathbb{T})$, we have
    $$ d_{(a)}^{n}(MW_{2}^{r}(\mathbb{T}^{d}),\mu,S_\infty(\mathbb{T}^d))_p \ll(n^{-1}\ln^{v-1}n)^{r_{1}+\frac{\rho}{2}}\sqrt{\ln n}. $$

    Next, we give the lower bound of the average Gel'fand width. According to equation \ref{eq4.3.1}, there exists a constant $c>0$, then
    \begin{equation}\label{eq4.3.3}
        d_{\frac{1}{2}}^{n}(MW_{2}^{r}(\mathbb{T}^d),\mu,S_\infty(\mathbb{T}^d)) > c(n^{-1}\ln^{v-1}n)^{r_{1}+\frac{\rho}{2}}\sqrt{\ln2n}. \tag{4.3.3}
    \end{equation}
    
    Consider the set
    $$ G'=\left\{ x\in MW_{2}^{r}(\mathbb{T}^d)\cap F^{n} \mid \|x\|_{S_\infty} > c(n^{-1}\ln^{v-1}n)^{r_{1}+\frac{\rho}{2}}\sqrt{\ln2n} \right\},$$
    We claim $\mu(G')>\frac{1}{2}$. The proof method is the same as $W_2^r(\mathbb{T})$. Otherwise, it will contradict the inequality (\ref{eq4.3.3}). Then,
    \begin{align*}
	\int_{MW_{2}^{r}(\mathbb{T}^d)\cap F^{n}} \|x\|_{S_\infty}^{p} \mathrm{d}\mu &\gg \int_{G'}\|x\|_{S_\infty}^{p} \mathrm{d}\mu\\
	&\gg \left( (n^{-1}\ln^{v-1}n)^{r_{1}+\frac{\rho}{2}}\sqrt{\ln2n} \right)^{p}\mu(G')\\
	&\gg \left( (n^{-1}\ln^{v-1}n)^{r_{1}+\frac{\rho}{2}}\sqrt{\ln2n} \right)^{p}.
    \end{align*}
    Namely,
    $$ d_{(a)}^{n}(MW_{2}^{r}(\mathbb{T}^{d}),\mu,S_\infty(\mathbb{T}^d))_p \gg (n^{-1}\ln^{v-1}n)^{r_{1}+\frac{\rho}{2}}\sqrt{\ln n}.$$
    
    So, we obtain 
    $$ d_{(a)}^{n}(MW_{2}^{r}(\mathbb{T}^{d}),\mu,S_\infty(\mathbb{T}^d))_p \asymp (n^{-1}\ln^{v-1}n)^{r_{1}+\frac{\rho}{2}}\sqrt{\ln n}. $$
\end{proof}

\section{Summary}

In this article, we mainly studied the generalization of the definition of the probabilistic Gel'fand width. By using the Birkhoff orthogonality, we generalize it from the Hilbert space to the strictly convex reflexive space. Then, we improve many details of previous works and then give the equality relation between the probabilistic Gel'fand width and the probabilistic linear width of the Hilbert space. In the end, we use the relation to obtain the Gel'fand widths of the univariate Sobolev space and the multivariate Sobolev space in the $S_{\infty}$-norm in the probabilistic settings and calculate their exact order of average Gel'fand widths. As we all know, with the development of computing science and internal intersections within mathematics, the complexity of algorithms and properties of functional spaces are more and more important. Our research in the width theory is an effective way to clearly articulate the essence of the problem.
 
However, our research still has some shortcomings. For example, since the metric projection operator does not possess the linear property, we still cannot extend the relationship between probabilistic Gel'fand width and probabilistic linear width of the Hilbert space to the strictly convex reflexive space. This is a question that we still need to study in the future.

\bibliographystyle{unsrt}
\bibliography{main} 

\end{document}